\newcommand{\R}{\mathbb{R}}
\newcommand{\N}{\mathbb{N}}
\newcommand{\C}{\mathbb{C}}
\newcommand{\ov}[1]{\overline{#1}}
\newcommand{\eps}{\epsilon}
\renewcommand\Re{\operatorname{Re}}
\renewcommand\Im{\operatorname{Im}}
\newcommand\spec{\operatorname{Spec}}
\newcommand\p{\operatorname{\partial}} 
\newcommand\vp{\varphi}
\newtheorem{thm}{Theorem}[section]
\newtheorem{cor}[thm]{Corollary}
\newtheorem{lem}[thm]{Lemma}
\newtheorem{prop}[thm]{Proposition}
\newtheorem{rem}[thm]{Remark}
\theoremstyle{definition}
\numberwithin{equation}{section}
\begin{document}

\title[]{Multidimensional Borg--Levinson theorems for unbounded potentials}
\date{}
\keywords{Inverse spectral problem;  Borg-Levinson Theorem.}

\author[]{Valter Pohjola}
\address{Department of Mathematics and Statistics,  University of Jyväskylä}
\email{valter.pohjola@gmail.com}

\begin{abstract} 
We prove that the Dirichlet 
eigenvalues and Neumann boundary data of the corresponding eigenfunctions of the operator $-\Delta + q$,
determine the potential $q$, when $q \in L^{n/2}(\Omega,\R)$ and $n \geq 3$.
We also consider the case of incomplete spectral data, in the sense that the above spectral data 
is unknown for some finite number of eigenvalues. In this case we prove that the potential 
$q$ is uniquely determined for $q \in L^p(\Omega,\R)$ with $p=n/2$, for $n\geq4$ and
$p>n/2$, for $n=3$.

\end{abstract}

\maketitle

\section{Introduction}

Let $\Omega \subset \R^n$, $n \geq 3$ be a bounded domain, with a smooth boundary.
The operator $-\Delta + q$, with $q \in L^{n/2}(\Omega,\R)$ and form domain $H^1_0(\Omega)$,
has  a spectrum consisting of a discrete set of real eigenvalues, $\lambda_{k}$ of
finite mutliplicity, such that
\[
-\infty < \lambda_1 \leq \lambda_2 \leq \dots  \leq \lambda_k \to \infty,
\]
as $k\to \infty$.  The eigenvalues correspond to eigenfunctions
$\varphi_k$, which form an orthonormal basis in $L^2(\Omega)$
(see the appendix in section \ref{sec:spec} for some further discussion).

The  multi-dimensional Borg-Levinson problem first considered in \cite{NSU},
by Nachman, Sylvester and Uhlmann and independently by Novi\-kov in \cite{N},
is an inverse spectral problem that asks if a potential $q$ is uniquely determined
if one knows 
\begin{align}
    \lambda_k \text{  and  } \nu \cdot \nabla \varphi_k |_{\p\Omega},\; \text{ for } k \in \N,
\end{align}
where $\nu$ is the outward pointing unit normal vector to the boundary $\p\Omega$.
Nachman, Sylvester and Uhlmann showed that this is indeed possible for $q \in L^\infty(\Omega,\R)$.
This result is a higher dimensional variant of a question studied originally
by Borg in \cite{B} and Levinson
in \cite{L}, in the case of the 1-dimensional Schr\"{o}dinger equation.

\medskip
\noindent
The multi-dimensional Borg-Levinson problem has been studied in a number of settings.
It is not possible to give an extensive survey of this here and we will only mention a few 
results which are of relevance here.

The case unbounded or singular potentials $q \in L^p(\Omega,\R)$ $p>n/2$, has
been studied
by Päivärinta and Serov in \cite{PS}.
Krupchyk and Päivärinta studied the problem for higher order elliptic operators in \cite{KP},
in the case that $q \in L^\infty(\Omega,\R)$.

The problem has also been considered in the case when the spectral data for  a 
finite number of the eigenvalues is unknown.
One of the first to study the case of incomplete spectral data was Isozaki in \cite{I}.
A further interesting development in this direction is the work by Choulli and Stefanov in \cite{CS}
where they show that one only needs assume that the spectral data is asymptotically near to
each other, to obtain uniqueness.

\medskip
The main results here are the following. We use the notation $\lambda_{q_j,k}$ and $\vp_{q_j,k}$
for the $k$th eigenvalue and eigenvector corresponding to the operator $-\Delta+q_j$.
And the notation $\tilde \gamma u$ for the trace of the normal derivative of $u$ to $\p \Omega$, which corresponds 
to $\nu \cdot \nabla u |_{\p\Omega}$ when $u$ is smooth (see section \ref{sec:app2} for further details).

\begin{thm}\label{thm1}
Suppose that $q_1,q_2\in L^{n/2}(\Omega,\R)$, with $n\geq3$
and that 
\[
    \lambda_{q_1,k} = \lambda_{q_2,k}
    \quad\text{and}\quad 
    \quad \tilde\gamma \varphi_{q_1,k} = \tilde\gamma \varphi_{q_2,k}, 
\]
for $k \in \N$, then $q_1 = q_2$.
\end{thm}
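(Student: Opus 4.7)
The plan is to reduce Theorem \ref{thm1} to the equality of Dirichlet-to-Neumann (DN) maps at a single spectral parameter $\lambda_0$ outside both spectra, and then to run a Calderón-type inverse boundary value argument with complex geometric optics (CGO) solutions adapted to $L^{n/2}$ potentials. First, I would fix $\lambda_0\in\R$ below $\spec(-\Delta+q_j)$ for both $j=1,2$ and define the DN maps $\Lambda_{q_j}(\lambda_0)\colon H^{1/2}(\p\Omega)\to H^{-1/2}(\p\Omega)$ via the weak formulation of $(-\Delta+q_j-\lambda_0)u=0$ with $u|_{\p\Omega}=f$; well-posedness for $q_j\in L^{n/2}(\Omega)$ follows from the form-domain/Fredholm framework recalled in the appendix (section \ref{sec:spec}). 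Expanding the unique solution in the eigenbasis $\{\vp_{q_j,k}\}$ and applying Green's identity yields, for any $\lambda$ in the resolvent set, the representation
\[
\langle (\Lambda_{q_j}(\lambda)-\Lambda_{q_j}(\lambda_0))f,g\rangle = (\lambda-\lambda_0)\sum_k \frac{\langle f,\tilde\gamma\vp_{q_j,k}\rangle\,\langle\tilde\gamma\vp_{q_j,k},g\rangle}{(\lambda_{q_j,k}-\lambda)(\lambda_{q_j,k}-\lambda_0)}.
\]
Because the spectral data coincide, the right-hand side is independent of $j$, so $\Lambda_{q_1}(\lambda)-\Lambda_{q_2}(\lambda)$ is constant in $\lambda$; combining this with the decay $\|\Lambda_{q_1}(\lambda)-\Lambda_{q_2}(\lambda)\|\to 0$ as $\lambda\to-\infty$ (obtained from a standard energy estimate on the Dirichlet problem) forces $\Lambda_{q_1}(\lambda_0)=\Lambda_{q_2}(\lambda_0)$.

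Next I would execute the CGO step. For each $\xi\in\R^n$ pick $\rho_1,\rho_2\in\C^n$ with $\rho_j\cdot\rho_j=0$, $\rho_1+\rho_2=-i\xi$, and $|\rho_j|\to\infty$, and construct solutions
\[
u_j = e^{x\cdot\rho_j}(1+r_j), \qquad (-\Delta+q_j-\lambda_0)u_j=0 \text{ in } \Omega.
\]
Conjugation reduces the problem to inverting $-\Delta-2\rho_j\cdot\nabla$ against the source $-(q_j-\lambda_0)(1+r_j)$. The key analytic input is the Kenig--Ruiz--Sogge $L^p$ estimate for this conjugated resolvent at the critical Hölder pair $(2n/(n+2),\,2n/(n-2))$; since multiplication by $q_j\in L^{n/2}$ maps $L^{2n/(n-2)}$ to $L^{2n/(n+2)}$, the composition has small operator norm on $L^{2n/(n-2)}$ for $|\rho_j|$ large, and a Neumann series then produces a remainder $r_j$ with $\|r_j\|_{L^{2n/(n-2)}}\to 0$ as $|\rho_j|\to\infty$.

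With CGO solutions in hand, the equality $\Lambda_{q_1}(\lambda_0)=\Lambda_{q_2}(\lambda_0)$ combined with Green's identity applied to the pair $(u_1,u_2)$ yields the integral identity
\[
\int_\Omega (q_1-q_2)\, u_1 u_2\, dx = 0.
\]
Expanding $u_1 u_2 = e^{-i\xi\cdot x}(1+r_1+r_2+r_1 r_2)$ and letting $|\rho_j|\to\infty$, the leading term produces the Fourier transform $\widehat{q_1-q_2}(\xi)$, while the remainder terms vanish by Hölder's inequality pairing $q_1-q_2\in L^{n/2}$ against quantities in $L^{n/(n-2)}$. Since $\xi\in\R^n$ is arbitrary, $q_1=q_2$. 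The principal obstacle lies in the CGO construction at the endpoint exponent $p=n/2$: the Neumann series, the mapping properties of $q_j$, and the Hölder pairing in the final identity must all be executed in the \emph{unique} Lebesgue exponent $L^{2n/(n-2)}$ allowed by the critical Kenig--Ruiz--Sogge estimate. There is no slack in the exponents, so maintaining uniform control of the remainders in $\rho_j$ at this endpoint — and verifying the decay of $\Lambda_{q_1}(\lambda)-\Lambda_{q_2}(\lambda)$ in a norm compatible with these CGO estimates — is the delicate technical core of the argument.
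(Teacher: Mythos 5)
Your overall reduction --- spectral data $\Rightarrow$ equality of the Dirichlet-to-Neumann maps at a single $\lambda$ $\Rightarrow$ uniqueness for the inverse boundary value problem with $q\in L^{n/2}$ --- is exactly the paper's strategy, and your final CGO step merely re-derives the result that the paper cites as a black box (\cite{LN}, \cite{Cha}, \cite{DKS}). The genuinely different piece is the middle step: you invoke the classical kernel representation
\[
\big\langle (\Lambda_{q}(\lambda)-\Lambda_{q}(\lambda_0))f,g\big\rangle
=(\lambda-\lambda_0)\sum_k
\frac{\langle f,\tilde\gamma\vp_{k}\rangle\,\langle\tilde\gamma\vp_{k},g\rangle}
{(\lambda_{k}-\lambda)(\lambda_{k}-\lambda_0)}
\]
to conclude that $\Lambda_{q_1}(\lambda)-\Lambda_{q_2}(\lambda)$ is constant in $\lambda$, whereas the paper differentiates the eigenfunction expansion $m$ times, shows $\p_\lambda^m[\Lambda_{q_1}-\Lambda_{q_2}](\lambda)f=0$ only for $m>(n+4)/2$, and then argues that the difference is a polynomial which must vanish because it decays as $\lambda\to-\infty$.

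There are two concrete gaps. First, the convergence of your undifferentiated series is precisely the problem the paper's $m$-fold differentiation is designed to circumvent. For $q\in L^{n/2}$ the only available control on the boundary data of eigenfunctions comes from $\|\vp_k\|_{W^{2,p}(\Omega)}\lesssim\lambda_k$ (estimate \eqref{eq_eigest}), so the $k$-th term of your series is only bounded by $O(\lambda_k^{-2}\cdot\lambda_k^{2})=O(1)$ and the sum has no a priori absolute convergence in any operator topology; after $m$ derivatives the paper's terms are $O(k^{-2(m+1)/n}k^{6/n})$, summable exactly when $m>(n+4)/2$ by the Weyl law. You would need either an Abel-summation/limiting justification of your formula or the paper's polynomial device. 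Second, the decay $\|\Lambda_{q_1}(\lambda)-\Lambda_{q_2}(\lambda)\|\to0$ is not ``a standard energy estimate'' at this regularity: for $q\in L^{n/2}$ it requires the $\lambda$-explicit Agmon-type $L^p$ a priori estimate (Proposition \ref{agmon_result}), a $\lambda$-uniform bound on $\|u\|_{L^{2n/(n-2)}(\Omega)}$ for the inhomogeneous Dirichlet problem obtained by a double splitting (Lemma \ref{c_indep_2}), and a Sobolev interpolation that only yields decay into the slightly weaker boundary space $B^{1-1/p-\varepsilon}_{pp}(\p\Omega)$. This is the technical core of the paper and cannot be waved through. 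Your closing caveat about the endpoint Kenig--Ruiz--Sogge exponents is well taken, but it concerns the step the paper outsources to the literature; the endpoint difficulties specific to this theorem sit in the two items above.
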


In the case of incomplete spectral data we have.

\begin{thm}\label{thm2}
Let $q_1,q_2 \in L^p(\Omega,\R)$, where $p = n/2$, when $n \geq 4$ and
$p>n/2$, when $n=3$, and suppose that there is a $k_0 \in \N$ such that
\[
    \lambda_{q_1,k} = \lambda_{q_2,k}
    \quad\text{and}\quad 
    \quad \tilde\gamma \varphi_{q_1,k} = \tilde\gamma \varphi_{q_2,k}, 
\]
for $k \geq k_0$, then $q_1 = q_2$.
\end{thm}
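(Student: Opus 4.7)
The plan follows the strategy of Theorem \ref{thm1}: pair the Alessandrini identity
\[
\int_\Omega (q_1 - q_2) u_1 u_2 \, dx = \langle (\Lambda_{q_2}(\lambda) - \Lambda_{q_1}(\lambda)) u_1|_{\partial\Omega},\, u_2|_{\partial\Omega}\rangle_{\partial\Omega},
\]
valid for $\lambda$ in the joint resolvent set and $u_j$ solving $(-\Delta + q_j - \lambda)u_j = 0$, with Faddeev-type complex geometric optics (CGO) solutions so as to recover the Fourier transform of $q_1 - q_2$. The new issue is that the spectral hypothesis only determines $\Lambda_{q_1}(\lambda) - \Lambda_{q_2}(\lambda)$ modulo a finite-rank perturbation: using the spectral representation $\Lambda_{q_j}(\lambda) = \sum_k \tfrac{|\tilde\gamma\varphi_{q_j,k}\rangle\langle\tilde\gamma\varphi_{q_j,k}|}{\lambda_{q_j,k} - \lambda}$, the terms with $k \geq k_0$ cancel and one is left with
\[
\Lambda_{q_1}(\lambda) - \Lambda_{q_2}(\lambda) = \sum_{k<k_0} \left[\frac{|\tilde\gamma\varphi_{q_1,k}\rangle\langle\tilde\gamma\varphi_{q_1,k}|}{\lambda_{q_1,k} - \lambda} - \frac{|\tilde\gamma\varphi_{q_2,k}\rangle\langle\tilde\gamma\varphi_{q_2,k}|}{\lambda_{q_2,k} - \lambda}\right],
\]
a meromorphic finite-rank operator.

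Next, I would construct CGO solutions $u_j(x) = e^{ix\cdot\zeta_j}(1 + r_j(x))$ with $\zeta_j\cdot\zeta_j = \lambda$, $\zeta_1 + \zeta_2 = -\xi$ for a fixed $\xi \in \R^n$, and $|\mathrm{Im}\,\zeta_j| = \tau \to \infty$, so that $\lambda \sim -\tau^2$. The hypothesis on $p$ ($p = n/2$ for $n \geq 4$, $p > n/2$ for $n = 3$) is exactly the threshold at which Kenig--Ruiz--Sogge-type $L^p \to L^{p'}$ resolvent estimates yield the remainder bound $\|r_j\|_{L^q} \to 0$ as $\tau \to \infty$ needed to control the potential via H\"older. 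The LHS of the Alessandrini identity then converges to $\widehat{(q_1 - q_2)\chi_\Omega}(-\xi)$.

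The main obstacle is to show that the finite-rank RHS also vanishes as $\tau \to \infty$. A naive estimate fails: the CGO trace has $\|u_j|_{\partial\Omega}\|_{H^{1/2}}$ of order $e^{C\tau}$, whereas the operator norm of the difference decays only like $\tau^{-2}$. The key tool is the Green's identity
\[
\int_{\partial\Omega} u_j\, \tilde\gamma\varphi_{q_j,k}\, dS = (\lambda - \lambda_{q_j,k})(\varphi_{q_j,k}, u_j)_{L^2(\Omega)}
\]
for matched indices, together with the same formula carrying an extra $\int_\Omega (q_1 - q_2)\varphi u\, dx$ term for the mismatched pairings $\int_{\partial\Omega} u_1\, \tilde\gamma\varphi_{q_2,k}\, dS$ and $\int_{\partial\Omega} u_2\, \tilde\gamma\varphi_{q_1,k}\, dS$. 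Substituting into the residue, the pole $(\lambda - \lambda_{q_j,k})^{-1}$ cancels the numerator factors; after rearrangement the RHS decomposes as (i) a finite sum of interior pairings $(\varphi_{q_j,k}, u_j)$ for $k < k_0$ and (ii) integrals of $(q_1 - q_2)$ against the spectral projection of $u_j$ onto the missing eigenspace $\mathrm{span}\{\varphi_{q_j,k}\}_{k<k_0}$. Term (ii) can be moved to the LHS, redefining the function recovered by the Fourier argument only up to a finite-dimensional correction that is killed by density; term (i) tends to zero by a Riemann--Lebesgue / non-stationary-phase argument applied to the finitely many fixed eigenfunctions $\varphi_{q_j,k}$, with $\mathrm{Im}\,\zeta_j$ chosen in the hyperplane orthogonal to $\xi$. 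Letting $\tau \to \infty$ yields $\widehat{(q_1-q_2)\chi_\Omega}(-\xi) = 0$ for all $\xi$, whence $q_1 = q_2$.

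The hardest part will be the cancellation-and-decay argument just described: verifying that the cross terms absorb cleanly into the LHS without producing uncontrolled $\tau$-dependence, and that the interior integrals $(\varphi_{q_j,k}, u_j)$ actually decay to zero, possibly after averaging over the direction of $\mathrm{Im}\,\zeta_j$. Apart from this step the proof parallels that of Theorem \ref{thm1}, with the strengthened integrability hypothesis for $n=3$ reflecting the well-known gap in $L^{n/2}$-resolvent estimates at the critical exponent in low dimensions.
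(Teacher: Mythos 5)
There is a genuine gap at the step you yourself flag as the hardest. Your plan is to use CGO solutions $u_j = e^{ix\cdot\zeta_j}(1+r_j)$ with $|\Im\zeta_j| = \tau\to\infty$ and $\lambda\sim-\tau^2$ real, and then to argue that the finite-rank remainder coming from the missing eigenvalues $k<k_0$ is killed by cancellation plus a ``Riemann--Lebesgue / non-stationary phase'' argument applied to the interior pairings $(\varphi_{q_j,k},u_j)$. This cannot work as stated: with $\lambda$ real and large negative, $u_j$ grows like $e^{\tau h(x)}$ on part of $\Omega$, so $(\varphi_{q_j,k},u_j)$ is a priori of size $e^{C\tau}$, and oscillation in the real part of the phase gives no decay against exponential growth (Riemann--Lebesgue requires a bounded, merely oscillating integrand). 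The same exponential growth contaminates your term (ii), and ``killed by density'' is not an argument there. The idea you are missing is Isozaki's choice of spectral parameter, which the paper adopts: take $\sqrt{\tau(m)} = m+i$, so that $\Im\tau(m) = 2m\to\infty$ while $\Im\bigl(\sqrt{\tau(m)}\,\omega(m)\bigr)$ stays bounded. Then the exponential solutions $e^{i\sqrt{\tau}\omega\cdot x}$ remain uniformly bounded, and the finite-rank part $\sum_{k<k_0} A_{q,k}/(\lambda_{q,k}-\tau)$ decays like $1/|\tau|$ for free --- no cancellation argument against growing traces is needed at all. (The paper's Lemma \ref{LambdaDlim} makes this rigorous by differentiating the DN map $m>n+5$ times so that the eigenfunction expansion converges, integrating back, and ruling out the resulting polynomial ambiguity via Proposition \ref{DNmap_lim}; note that your closed-form ``spectral representation'' of $\Lambda_q(\lambda)$ does not converge as written, which is why this detour is necessary.)

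A second discrepancy: you attribute the role of the hypothesis $p>n/2$ for $n=3$ to Kenig--Ruiz--Sogge uniform resolvent estimates controlling the CGO remainder $r_j$. The paper explicitly remarks that such uniform estimates are not available for the Dirichlet resolvent on a bounded domain in the relevant regime. What it actually does is prove the weaker bound $\| R_q(\tau(m))f\|_{L^{2n/(n-2)}(\Omega)}\leq C|\Im\tau(m)|\,\|f\|_{L^{2n/(n+2)}(\Omega)}$ (Lemma \ref{2ndRes}), interpolate it against the elementary bound $\|R_q(\tau)f\|_{L^2}\leq C|\Im\tau|^{-1}\|f\|_{L^2}$ by Riesz--Thorin, and observe that the interpolated power of $|\Im\tau(m)|$ is negative precisely when $p>n/2$; for $n\geq4$ the $L^2$ bound alone suffices since $p^*\leq2\leq p$. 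So the exponent hypothesis enters through the interpolation used to kill the second-order Born term $\langle qR_q(\tau)(q\psi_\omega),\overline{\psi_\theta}\rangle$, not through a CGO remainder estimate.
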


\medskip
\noindent
The above Theorems improve the results in \cite{PS} in two ways when $n\geq3$.
We firstly prove the  Borg-Levinson Theorem for singular or unbounded potentials in the 
limiting case of $q \in L^{n/2}(\Omega,\R)$. 
The second Theorem extends the result in \cite{PS} to the case of incomplete spectral data, 
when $q \in L^p(\Omega,\R)$ and $p > n/2$, if $n=3$ and $p=n/2$, if $n \geq 4$. 
We do not consider the two dimensional case here. 
The character of the two dimensional problem is somewhat different, since
in this case $q \in L^1(\Omega,\R)$.

Theorem \ref{thm1} is proved by reducing it to the corresponding
inverse boundary value problem, which has been solved for $q \in L^{n/2}(\Omega)$
(see \cite{LN}, \cite{Cha} and \cite{DKS}).
The proof here is roughly of the same form as the argument in \cite{C}. 
Here we however use the $L^p$-theory of elliptic equations in 
a fairly systematic way, which enables us to handle unbounded potentials.

The proof of Theorem \ref{thm2} is based on the argument used in \cite{I}. 
Here one needs to consider a spectral parameter that goes to infinity in a specific way.
One needs moreover some form of $L^p$ resolvent estimates with an explicit dependence on 
the spectral parameter. The most interesting case here is $n=3$, since $n/2=3/2<2$. 
It turns out that for $q \in L^p(\Omega)$, with $p>n/2$, one can still use
$L^2$-theory and interpolation, to prove Theorem \ref{thm2}.
The case $p=3/2$ seems however to require better estimates,
where the spectral parameter $\lambda \in \C$ is allowed,
to grow more freely (than in e.g. Proposition \ref{agmon_result}), 
similar to the so called uniform $L^p$-estimates found in \cite{KRS} and \cite{DKS},
    
\medskip
\noindent
The paper is structured as follows. In section \ref{sec_apriori} we prove that the Dirichlet problem
for $-\Delta + q$ admits strong solutions, when considering appropriate boundary data. 
We also  derive some a-priori $L^p$-estimates. 
In section \ref{sec_s2DN} we prove Theorem \ref{thm1}. In section \ref{sec_incomplete}
we consider the case of incomplete spectral data and prove Theorem \ref{thm2}.
At the end of the paper we have included two 
appendices. The first one in section \ref{sec:spec} reviews some facts from spectral theory that we use.
The second appendix in section \ref{sec:app2} presents some basic facts concerning Besov spaces.

\section{ A priori estimates and strong solutions} \label{sec_apriori} 
\noindent
The aim of this section is to show in some detail 
that the Dirichlet problem for $-\Delta +q - \lambda$, $q \in L^{n/2}(\Omega,\R)$
admits strong solutions when considering boundary conditions from appropriate spaces. 
Recall that a strong solution is a solution in $W^{2,p}(\Omega)$, which satisfies the
equation almost everywhere.
Having strong solutions  will guarantee that 
we can later define the so called Dirichlet-to-Neumann map in a suitable way.
We end the section by deriving
an a priori estimate that has an explicit dependence on the 
spectral parameter $\lambda$.

\medskip
\noindent
Our main aim will be to prove existence and uniqueness of solutions to the
Dirichlet problem
\begin{equation}
\begin{aligned}
\label{eq_bvp21}
( -\Delta +q - \lambda )u&=0\quad\textrm{in}\quad \Omega, \\
\gamma u&=f \quad\textrm{on}\quad \p\Omega,
\end{aligned}
\end{equation}
when $q \in L^{n/2}(\Omega,\R)$ and $f \in B^{2-1/p}_{pp}(\p \Omega)$.

\medskip
\noindent
We begin by deriving solutions to the corresponding inhomogeneous problem with
zero Dirichlet condition. To this end we will need a priori estimates in
$L^p$-norms, which we now state.

\begin{prop} \label{ADNthm}
Assume $q \in L^{n/2}(\Omega)$.
Suppose $u \in W^{2,p}(\Omega) \cap W^{1,p}_0(\Omega)$, $p = \frac{2n}{n+2}$, then
\begin{align} \label{eq_ADNest}
    \| u \|_{W^{2,p}(\Omega)} \leq C (\|( -\Delta +q) u \|_{L^p(\Omega)} + \| u \|_{L^p(\Omega)} ),
\end{align}
\end{prop}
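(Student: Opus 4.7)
The natural plan is to reduce the estimate to the well-known Calder\'on--Zygmund / ADN estimate for the pure Laplacian on $W^{2,p}(\Omega)\cap W^{1,p}_0(\Omega)$, namely
\[
\|u\|_{W^{2,p}(\Omega)} \leq C\bigl(\|\Delta u\|_{L^p(\Omega)} + \|u\|_{L^p(\Omega)}\bigr),
\]
and then treat the term $qu$ as a perturbation. Writing $\Delta u = -(-\Delta+q)u + qu$, the whole game reduces to controlling $\|qu\|_{L^p(\Omega)}$ in terms of $\|u\|_{W^{2,p}(\Omega)}$ and $\|u\|_{L^p(\Omega)}$.

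The choice $p=2n/(n+2)$ is dictated by scaling: one computes $\tfrac{1}{p}-\tfrac{2}{n}=\tfrac{n-2}{2n}$, so the Sobolev embedding gives $W^{2,p}(\Omega)\hookrightarrow L^{2n/(n-2)}(\Omega)$, while the H\"older identity $\tfrac{1}{p}=\tfrac{2}{n}+\tfrac{n-2}{2n}$ yields
\[
\|qu\|_{L^p(\Omega)} \leq \|q\|_{L^{n/2}(\Omega)}\,\|u\|_{L^{2n/(n-2)}(\Omega)} \leq C\|q\|_{L^{n/2}(\Omega)}\,\|u\|_{W^{2,p}(\Omega)}.
\]
This is a critical, scale-invariant pairing: one cannot simply absorb the $\|u\|_{W^{2,p}}$ into the left-hand side, since $\|q\|_{L^{n/2}}$ is not a priori small. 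This is the main obstacle.

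The standard way past this is a truncation argument on $q$. For $M>0$, split $q = q_0 + q_M$ with $q_M := q\,\mathbf{1}_{\{|q|\leq M\}}$ and $q_0 := q\,\mathbf{1}_{\{|q|>M\}}$. Then $q_M\in L^\infty(\Omega)$ with $\|q_M\|_{L^\infty}\leq M$, and by dominated convergence $\|q_0\|_{L^{n/2}(\Omega)}\to 0$ as $M\to\infty$. Choosing $M$ large enough that $C\|q_0\|_{L^{n/2}}<\tfrac12$ (where $C$ is the constant coming from Sobolev embedding times the ADN constant), we estimate
\[
\|qu\|_{L^p(\Omega)} \leq \|q_0 u\|_{L^p(\Omega)} + \|q_M u\|_{L^p(\Omega)} \leq \tfrac{1}{2}\,\|u\|_{W^{2,p}(\Omega)} \,+\, M\,\|u\|_{L^p(\Omega)}.
\]

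Combining this with the ADN estimate applied to $\Delta u=-(-\Delta+q)u+qu$ gives, after absorbing $\tfrac12\|u\|_{W^{2,p}}$ on the left,
\[
\tfrac12\|u\|_{W^{2,p}(\Omega)} \leq C\bigl(\|(-\Delta+q)u\|_{L^p(\Omega)} + (1+M)\|u\|_{L^p(\Omega)}\bigr),
\]
which is the desired bound, with constant depending on $q$ only through the truncation level $M=M(q)$. The whole argument is clean once one identifies the critical Sobolev exponent; the truncation device is what handles the fact that $L^{n/2}$ is exactly the borderline and provides no slack for a direct small-coefficient argument.
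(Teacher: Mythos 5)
Your proof is correct, and the key technical content --- the critical H\"older--Sobolev pairing $\|qu\|_{L^p}\leq \|q\|_{L^{n/2}}\|u\|_{L^{2n/(n-2)}}\leq C\|q\|_{L^{n/2}}\|u\|_{W^{2,p}}$ for $p=2n/(n+2)$, followed by a decomposition of $q$ into a bounded piece plus a piece with small $L^{n/2}$-norm so that the critical term can be absorbed --- is exactly the mechanism the paper uses. The route is, however, organized differently. The paper's stated proof of this proposition is a two-line deduction from the later Agmon-type estimate (Proposition \ref{agmon_result}), which carries an explicit $\lambda$-dependence and is proved via the added-variable trick $v(x,t)=\zeta(t)e^{i\mu t}u(x)$ on $\Omega\times(-1,1)$; the perturbation-and-absorption step for $\tilde q v$ appears inside that proof, using smooth approximants $q_k\to q$ in $L^{n/2}$ rather than your truncation $q=q\,\mathbf{1}_{\{|q|\leq M\}}+q\,\mathbf{1}_{\{|q|>M\}}$ (the two devices are interchangeable here). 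What your approach buys is directness and self-containedness: you only invoke the standard Calder\'on--Zygmund/ADN estimate for $-\Delta$ on $\Omega$ itself, with no spectral parameter and no forward reference to a result proved later in the section. What the paper's route buys is economy at the global level: since the $\lambda$-dependent estimate is needed anyway for Lemmas \ref{c_indep_1}--\ref{c_indep_2} and the analysis of the DN-maps, the proposition falls out as a free corollary. Both arguments are sound; yours would stand on its own as a replacement proof.
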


\begin{proof}
By the estimate \eqref{eq:agmon_est}  of Lemma \ref{agmon_result} 
at the end of this section, we have that
\begin{align*}
    \| u \|_{W^{2,p}(\Omega)} &\leq C_0 \|( -\Delta + q - \lambda_0) u \|_{L^p(\Omega)} \\
                              &\leq C ( \|( -\Delta + q) u\|_{L^p(\Omega)} + \|u \|_{L^p(\Omega)})
\end{align*}
for some $\lambda_0 \in \R_{-}$.

\end{proof}

\noindent
The previous a priori estimate and the resolvent estimate \eqref{sobolevResEst} 
for Sobolev spaces can
be used to prove the following existence and uniqueness result
or the inhomogeneous problem, when $q \in L^{n/2}(\Omega)$ (see also Lemma 9.17 and Theorem
9.15 in \cite{GT}). Notice also that the Proposition applies to complex $\lambda$.

\begin{prop} \label{strong_prop}
Let $q \in L^{n/2}(\Omega)$ and $p=2n/(n+2)$. There exists a $\lambda_0 \in \R$
such that for $\lambda \in \C \setminus (\lambda_0,\infty)$,
there is an unique strong solution  $w \in W^{2,p}(\Omega) \cap W^{1,p}_0(\Omega)$ to
\begin{equation}
\begin{aligned}
\label{eq:bvp3}
(-\Delta + q - \lambda ) w &= F \quad\textrm{in}\quad \Omega, \\
\gamma w &= 0 \quad\textrm{on}\quad \p\Omega,
\end{aligned}
\end{equation}
for all $F \in L^p(\Omega)$. Moreover  we have that 
\begin{align} \label{eq:strong_est}
    \|w\|_{W^{2,p}(\Omega)} \leq   C_\lambda \|F\|_{L^p(\Omega)} .
\end{align}
\end{prop}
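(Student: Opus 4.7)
The plan is three-step: solve the problem for a single sufficiently negative $\lambda_0 \in \R$ by a Hilbert-space weak formulation, bootstrap the regularity of that weak solution to $W^{2,p}$, and then extend to $\lambda \in \C\setminus(\lambda_0,\infty)$ by a compact perturbation / Fredholm argument that uses the self-adjointness of the Dirichlet realization of $-\Delta+q$ in $L^2(\Omega)$. The exponent $p=2n/(n+2)$ is dual to the Sobolev exponent $p'=2n/(n-2)$ of $H^1_0(\Omega)$, so one has continuous inclusions $L^p(\Omega)\hookrightarrow H^{-1}(\Omega)$ and $H^1_0(\Omega)\hookrightarrow L^{p'}(\Omega)$, and these will be used repeatedly.

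For the first step, I fix $\lambda_0\in\R$ very negative. Splitting $q=q_1+q_2$ with $q_1\in L^\infty$ and $\|q_2\|_{L^{n/2}}$ as small as one wishes, Hölder together with the Sobolev inequality shows that the sesquilinear form
\[
a_{\lambda_0}(u,v)=\int_\Omega \nabla u\cdot\overline{\nabla v}+(q-\lambda_0)u\bar v\,dx
\]
is bounded and coercive on $H^1_0(\Omega)$. Hence, for $F\in L^p\hookrightarrow H^{-1}$, Lax--Milgram (equivalently, the Sobolev resolvent estimate \eqref{sobolevResEst}) yields a unique weak solution $w\in H^1_0(\Omega)$ of \eqref{eq:bvp3} at $\lambda=\lambda_0$. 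To upgrade $w$ to $W^{2,p}\cap W^{1,p}_0$, I use $w\in L^{p'}$ and $q\in L^{n/2}$ to estimate, via Hölder, $\|qw\|_{L^p}\leq\|q\|_{L^{n/2}}\|w\|_{L^{p'}}$. Thus $-\Delta w=F+\lambda_0 w-qw\in L^p$, and standard $L^p$ elliptic regularity up to the smooth boundary gives $w\in W^{2,p}$; the norm bound \eqref{eq:strong_est} at $\lambda_0$ is then a direct consequence of Proposition \ref{ADNthm}.

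Call the resulting solution operator $R_0:L^p\to W^{2,p}\cap W^{1,p}_0$. For general $\lambda$ I rewrite $(-\Delta+q-\lambda)w=(-\Delta+q-\lambda_0)w-(\lambda-\lambda_0)w$ and substitute $w=R_0 v$, reducing the equation to the $L^p$-identity
\[
\bigl(I-(\lambda-\lambda_0)R_0\bigr)v=F.
\]
Rellich's theorem makes the inclusion $W^{2,p}\hookrightarrow L^p$ compact, so $R_0$ is compact on $L^p$ and the Fredholm alternative applies. Injectivity of $I-(\lambda-\lambda_0)R_0$ amounts to the absence of a nontrivial $w\in W^{2,p}\cap W^{1,p}_0$ with $(-\Delta+q)w=\lambda w$; any such $w$ lies in $H^1_0\cap L^2$ and is therefore an eigenfunction of the self-adjoint Dirichlet realization of $-\Delta+q$, which forces $\lambda$ to be a real eigenvalue. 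Choosing $\lambda_0$ also below the bottom of the discrete spectrum rules this out on $\C\setminus(\lambda_0,\infty)$, giving existence, uniqueness and the estimate \eqref{eq:strong_est} with a $\lambda$-dependent constant.

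The delicate point is the regularity bootstrap: it hinges on the fact that $q\in L^{n/2}$ and $w\in H^1_0\hookrightarrow L^{p'}$ combine by Hölder to place $qw$ \emph{exactly} in $L^p$, which is why the choice $p=2n/(n+2)$ is forced and why the hypothesis $q\in L^{n/2}(\Omega)$ is at the critical limit of what this scheme accommodates; all other steps are essentially compactness and Fredholm theory in a standard chain of Sobolev embeddings.
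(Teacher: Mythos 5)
Your proof is correct, but it follows a genuinely different route from the paper's. The paper approximates $F$ by $L^2$ data $F_k$, solves each approximate problem by $L^2$-theory, uses the a priori estimate of Proposition \ref{ADNthm} together with the resolvent bound \eqref{sobolevResEst} to get a uniform $W^{2,p}$ bound, extracts a weak limit via reflexivity and Rellich--Kondrachov, and finally obtains the norm estimate \eqref{eq:strong_est} by a separate compactness--contradiction argument. You instead solve once, at a single very negative real $\lambda_0$, by Lax--Milgram on the coercive form (the same $q=q_1+q_2$ splitting the paper uses in \eqref{eq_coercive}), bootstrap to $W^{2,p}$ via the critical H\"older pairing $L^{n/2}\times L^{2n/(n-2)}\to L^{2n/(n+2)}$ and $L^p$ elliptic regularity for the Dirichlet Laplacian, and then reach all $\lambda\in\C\setminus(\lambda_0,\infty)$ by the Fredholm alternative for the compact operator $R_0$ on $L^p$, with injectivity supplied by self-adjointness (eigenvalues are real and lie above $\lambda_0$). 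Your approach buys a cleaner norm estimate — it falls out of the bounded inverse $(I-(\lambda-\lambda_0)R_0)^{-1}$ rather than a contradiction argument — at the price of invoking the $L^p$ solvability/regularity theory for the Laplacian with $p=2n/(n+2)<2$ (GT Theorem 9.15, which the paper also cites), plus the small but standard identification step that the $H^1_0$ Lax--Milgram solution coincides with the $W^{2,p}\cap W^{1,p}_0$ strong solution (both lie in $W^{1,p}_0$ and the homogeneous Dirichlet problem has only the trivial very weak solution); you should state that identification explicitly, and likewise note that $W^{2,p}\hookrightarrow H^1$ for this $p$ so that a homogeneous $W^{2,p}\cap W^{1,p}_0$ solution really is an $H^1_0$ eigenfunction. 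Neither point is a gap, only an omission of routine detail.
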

\begin{proof}
Pick  $F_k \in L^2(\Omega)$, s.t. $F_k \to F$, in the $L^p(\Omega)$-norm.
Denote by $u_k$ the corresponding solutions to the problem
\begin{equation*}
\begin{aligned}
(-\Delta + q - \lambda ) u_k &= F_k \quad\textrm{in}\quad \Omega, \\
\gamma u_k &= 0 \quad\textrm{on}\quad \p\Omega,
\end{aligned}
\end{equation*}
From the $L^2$-theory of elliptic partial differential operators, we know that  
that there is an $\lambda_0$ such that the solution $u_k$ exists and is unique,
when $\lambda \in \C \setminus (\lambda_0,\infty)$. 
By \eqref{eq_ADNest}  we have the estimate
\begin{align*} 
\| u_k \|_{W^{2,p}(\Omega)} \leq C_\lambda (\|( -\Delta +q -\lambda) u_k \|_{L^p(\Omega)} + \| u_k \|_{L^p(\Omega)} ).
\end{align*}
The solution is given by the resolvent, i.e. $u_k = R_q(\lambda)F_k \in H^1_0(\Omega)$. 
For the resolvent we have estimate \eqref{sobolevResEst}  
and by Sobolev embedding we have that
\begin{align*} 
\| R_q(\lambda) F_k \|_{L^\frac{2n}{n-2}(\Omega)} \leq C_\lambda \| F_k \|_{L^\frac{2n}{n+2}(\Omega)}.
\end{align*}
Combining the two previous estimates we get that 
\begin{align} \label{eq_weak}
    \| u_k \|_{W^{2,p}(\Omega)} & \leq C_\lambda (\| F_k \|_{L^p(\Omega)} 
    + \| R_q(\lambda) F_k \|_{L^p(\Omega)} ) \nonumber \\
    &\leq C_\lambda \| F_k \|_{L^p(\Omega)}   < C_\lambda < \infty 
\end{align}
Now $W^{2,p}(\Omega)\cap W^{1,p}_0(\Omega) \subset W^{2,p}(\Omega)$ is a complete and reflexive subspace, and
$\{u_k\}$ is a bounded set in this subspace because of \eqref{eq_weak}.
As a consequence we obtain a subsequence $w_k$ that converges weakly, i.e. 
$w_k\rightharpoonup w \in W^{2,p}(\Omega) \cap W^{1,p}_0(\Omega)$.
The weak convergence implies that 
\begin{align*} 
    \int_\Omega \varphi D^\alpha w_k \to 
    \int_\Omega \varphi D^\alpha w,
\end{align*}
for multi-indices $\alpha$, $|\alpha| \leq 2$ and $\varphi \in L^{2n/(n-2)}(\Omega)$.
This implies that
\begin{align}  \label{eq_weak_s1} 
    \int_\Omega \nabla \varphi \cdot \nabla (w_k - w) \varphi \to 0,
\end{align}
when $\varphi \in C^\infty_0(\Omega)$.
The Rellich-Kondrachov Theorem implies on the other hand that the embedding 
$id \colon W^{2,p}(\Omega) \to L^q(\Omega)$ is compact when $q < 2n/(n-2)$, so that $w_k \to w$ in the
$L^q(\Omega)$-norm, for $q< 2n/(n-2)$.
It follows in particular  that $w_k \to w$ in the $L^{n/(n-2)}(\Omega)$-norm. This and the 
Hölder inequality gives that
\begin{align} \label{eq_weak_s2} 
    \int_\Omega (q-\lambda)(w_k - w) \varphi 
    \leq \|(q-\lambda)\varphi \|_{L^\frac{n}{2}(\Omega)} \|w_k - w\|_{L^\frac{n}{n-2}(\Omega)} \to 0,
\end{align}
for $\varphi \in C_0^{\infty}(\Omega)$. 
It is straight forward to see using \eqref{eq_weak_s1}, \eqref{eq_weak_s2} and that
$F_k \to F$, in the $L^p(\Omega)$-norm, that $w$ is a weak solution to \eqref{eq:bvp3}. 
And thus $w$ a strong solution, since $w\in W^{2,p}(\Omega)$.

The weak solution $w$ is in $H^1_0(\Omega)$ and is a weak solution to
\eqref{eq:bvp3}, when $F \in L^p(\Omega)$ is
taken as an element in $H^{-1}(\Omega)$. The $L^2$-theory of elliptic operators implies 
that $w$ is the unique solution in $H^1_0(\Omega)$.
It follows that $w$ is the  unique solution in $W^{2,p}(\Omega) \cap W^{1,p}_0(\Omega)$.

\medskip
\noindent
It remains to prove the norm estimate. Assume
that estimate \eqref{eq:strong_est} is false, then there exists a sequences of 
functions  $F_k \in L^p(\Omega)$ and corresponding solutions  $u_k \in W^{2,p}(\Omega)$ s.t.
\begin{align*} 
\| u_k \|_{L^p(\Omega)} \geq k \| F_k \|_{L^p(\Omega)}.
\end{align*}
We may assume that  $\|u_k\|_{L^p(\Omega)}= 1$, so that $F_k \to 0$, in the $L^p(\Omega)$-norm.
Using \eqref{eq_ADNest} again, we have the estimate
\begin{align*} 
\| u_k \|_{W^{2,p}(\Omega)} \leq C_\lambda (\|( F_k \|_{L^p(\Omega)} + \| u_k \|_{L^p(\Omega)} ) < M < \infty.
\end{align*}
The set $\{u_k\}$ is hence bounded in the subspace  $W^{2,p}(\Omega) \cap W^{1,p}_0(\Omega)$. 
As earlier in the proof there is a subsequence $w_k$ that converges weakly, i.e. 
$w_k \rightharpoonup w_0 \in W^{2,p}(\Omega)$
in the $W^{2,p}(\Omega)$-norm. 
The Rellich-Kondrachov Theorem implies again that $w_k \to w_0$ in the
$L^q(\Omega)$-norm, for $q< 2n/(n-2)$. Which implies that $\|w_0\|_{L^p(\Omega)} = 1$,
since we picked $\|u_k\|_{L^p(\Omega)}= 1$.
Arguing as in the first part of the proof, we see that $w_0$ solves
\begin{equation*}
\begin{aligned}
(-\Delta + q - \lambda ) w_0 &= 0 \quad\textrm{in}\quad \Omega, \\
\gamma w_0 &= 0 \quad\textrm{on}\quad \p\Omega.
\end{aligned}
\end{equation*}
The solution to \eqref{eq:bvp3} is however unique and thus $w_0 \equiv 0$, which is a contradiction,
since  $\|w_0\|_{L^p(\Omega)} = 1$.

\end{proof}

\noindent
Recall that we can reduce the problem in \eqref{eq_bvp21} to the inhomogeneous problem
in \eqref{eq:bvp3}. 
The proof of the following Lemma is standard. We give it here as a convenience and since the argument
has a dependence on $\lambda$. 

\begin{lem} \label{red_lem}
The boundary value problem  \eqref{eq_bvp21} has a unique solution $u \in W^{2,p}(\Omega)$, 
$p=2n/(n+2)$ if the inhomogeneous problem
\begin{equation}
\begin{aligned}
\label{eq_bvp22}
(-\Delta + q - \lambda ) w &= F \quad\textrm{in}\quad \Omega, \\
\gamma w &= 0 \quad\textrm{on}\quad \p\Omega,
\end{aligned}
\end{equation}
admits a unique solution $w \in W^{2,p}(\Omega)$, for all
$F \in L^p(\Omega)$. 
If moreover the estimate 
\begin{align} \label{eq:red_est1}
    \|w\|_{W^{2,p}(\Omega)} \leq C \| F \|_{L^p(\Omega)}, 
\end{align}
holds, then we have the estimate   
\begin{align} \label{eq:red_est2}
    \|u\|_{W^{2,p}(\Omega)} \leq C_\lambda \| f \|_{B^{2-1/p}_{pp}(\p \Omega)}.
\end{align}
\end{lem}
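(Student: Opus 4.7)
The plan is a standard reduction of \eqref{eq_bvp21} to the inhomogeneous problem \eqref{eq_bvp22} by lifting the boundary data. First I invoke the trace theorem for Besov spaces (reviewed in Section \ref{sec:app2}): the trace operator $\gamma\colon W^{2,p}(\Omega) \to B^{2-1/p}_{pp}(\p\Omega)$ admits a bounded right inverse $E$, so for $f \in B^{2-1/p}_{pp}(\p\Omega)$ there exists $Ef \in W^{2,p}(\Omega)$ with $\gamma(Ef)=f$ and
\[
\|Ef\|_{W^{2,p}(\Omega)} \leq C\|f\|_{B^{2-1/p}_{pp}(\p\Omega)}.
\]
Writing $u = w + Ef$, the problem \eqref{eq_bvp21} becomes the inhomogeneous problem \eqref{eq_bvp22} for $w$ with source
\[
F := -(-\Delta + q - \lambda)(Ef),
\]
and zero boundary data.

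The content of the reduction is then to verify $F \in L^p(\Omega)$ with $\|F\|_{L^p(\Omega)} \leq C_\lambda \|f\|_{B^{2-1/p}_{pp}(\p\Omega)}$. The contributions $\Delta(Ef)$ and $\lambda\, Ef$ are immediately controlled by $\|Ef\|_{W^{2,p}(\Omega)}$. The only nontrivial term is $q\,Ef$, and this is exactly where the choice $p=2n/(n+2)$ matters. By Hölder with the pair $n/2$ and $2n/(n-2)$, followed by the Sobolev embedding $W^{2,p}(\Omega) \hookrightarrow L^{2n/(n-2)}(\Omega)$, which holds because $1/p - 2/n = (n-2)/(2n)$,
\[
\|q\,Ef\|_{L^p(\Omega)} \leq \|q\|_{L^{n/2}(\Omega)} \|Ef\|_{L^{2n/(n-2)}(\Omega)} \leq C\|q\|_{L^{n/2}(\Omega)} \|Ef\|_{W^{2,p}(\Omega)}.
\]
Combining the three terms gives the desired control on $\|F\|_{L^p(\Omega)}$.

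With this $F$, the hypothesis on \eqref{eq_bvp22} yields a unique $w \in W^{2,p}(\Omega) \cap W^{1,p}_0(\Omega)$ solving $(-\Delta + q - \lambda)w = F$, and by \eqref{eq:red_est1}, $\|w\|_{W^{2,p}(\Omega)} \leq C\|F\|_{L^p(\Omega)}$. Setting $u := w + Ef$ gives a $W^{2,p}(\Omega)$-solution of \eqref{eq_bvp21}, and
\[
\|u\|_{W^{2,p}(\Omega)} \leq \|w\|_{W^{2,p}(\Omega)} + \|Ef\|_{W^{2,p}(\Omega)} \leq C_\lambda \|f\|_{B^{2-1/p}_{pp}(\p\Omega)},
\]
which is \eqref{eq:red_est2}. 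For uniqueness, if $u_1,u_2$ are two $W^{2,p}$-solutions of \eqref{eq_bvp21}, their difference lies in $W^{2,p}(\Omega)\cap W^{1,p}_0(\Omega)$ and solves \eqref{eq_bvp22} with $F=0$, so it vanishes by the assumed uniqueness.

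I do not expect any real obstacle beyond bookkeeping; the single nontrivial check is the Hölder/Sobolev exponent matching that absorbs the singular factor $q \in L^{n/2}(\Omega)$ into the $L^p$-source term, and the $\lambda$-dependence of the final estimate enters transparently through the $-\lambda\, Ef$ contribution to $F$ and through the constant already present in the assumed estimate \eqref{eq:red_est1}.
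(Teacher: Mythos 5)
Your proposal is correct and follows essentially the same route as the paper: lift $f$ via the extension operator $E$, check that $(-\Delta+q-\lambda)Ef \in L^p(\Omega)$ by the H\"older/Sobolev pairing $L^{n/2}\cdot L^{2n/(n-2)} \subset L^{2n/(n+2)}$, solve the zero-boundary-data problem for the resulting source, and get uniqueness and the norm bound by the difference argument and the triangle inequality. No discrepancies worth noting.
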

\begin{proof}
Choose $F$ so that $F = (-\Delta+q - \lambda) E f$, 
where $E$ is the extension operator defined in section \ref{sec:app2}.
Notice that $F \in L^p(\Omega)$, because 
\begin{align*}
     \|\Delta E f\|_{L^p(\Omega)} \leq 
     C \| E f\|_{W^{2,p}(\Omega)},
\end{align*}
and since by Sobolev embedding $W^{2,p}(\Omega) \subset L^\frac{2n}{n-2}(\Omega)$
and the H\"older inequality we have that
\begin{align*}
    \| q E f\|_{L^\frac{2n}{n+2}(\Omega)}   &\leq 
    \| q \|_{L^\frac{n}{2}(\Omega)}     
    \| E f\|_{L^\frac{2n}{n-2}(\Omega)}   \\
    &\leq
    \| q \|_{L^\frac{n}{2}(\Omega)}     
    \| E f \|_{W^{2,p}(\Omega)}.
\end{align*}
Let $w \in W^{2,p}(\Omega)$ be the unique solution 
of \eqref{eq_bvp22}, corresponding to $-F$. 
The function $u:= w + E f\in W^{2,p}(\Omega)$  solves then \eqref{eq_bvp21}. This
proves existence. 

Suppose on the other hand that $u_1$ and $u_2$ solve \eqref{eq_bvp21}. Then $u_1-u_2$ will be 
a solution to \eqref{eq_bvp22} with a zero source term. Uniqueness for \eqref{eq_bvp22} 
    now implies that $u_1=u_2$.

To  obtain  the norm estimate \eqref{eq:red_est2} we  use \eqref{eq:red_est1} and argue as follows.
\begin{align*} 
    \|u\|_{W^{2,p}(\Omega)}  
    &\leq  \| w \|_{W^{2,p}(\Omega)} + \| Ef\|_{W^{2,p}(\Omega)} \\
    &\leq  C \|(-\Delta+q - \lambda) E f \|_{L^p(\Omega)} + \| Ef\|_{W^{2,p}(\Omega)} \\
    &\leq  C_\lambda \| Ef\|_{W^{2,p}(\Omega)} \\
    &\leq  C_\lambda \| f \|_{B_{pp}^{2-1/p}(\p\Omega)}
\end{align*}

\end{proof}

\noindent
As a Corollary to Lemma \ref{red_lem} and Proposition \ref{strong_prop} we have the following 
existence result.

\begin{cor}\label{exi_uniq}
Let $q \in L^{n/2}(\Omega)$ and $p=2n/(n+2)$. There exists a $\lambda_0 \in \R$,
such that the Dirichlet problem \eqref{eq_bvp21} has a unique strong solution 
$u \in W^{2,p}(\Omega)$, when $\lambda \in \C \setminus (\lambda_0,\infty)$.
Moreover 
\begin{align} \label{eq:strong2_est}
    \|u\|_{W^{2,p}(\Omega)} \leq   C_\lambda \|f\|_{B_{pp}^{2-1/p}(\p\Omega)} .
\end{align}
\end{cor}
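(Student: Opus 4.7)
The plan is to combine the two main results already established in the section, namely Proposition \ref{strong_prop} and Lemma \ref{red_lem}. Proposition \ref{strong_prop} provides exactly the hypothesis needed by Lemma \ref{red_lem}: it shows that there is a $\lambda_0\in\R$ such that for every $\lambda\in\C\setminus(\lambda_0,\infty)$ and every source $F\in L^p(\Omega)$ with $p=2n/(n+2)$, the inhomogeneous Dirichlet problem \eqref{eq:bvp3} has a unique strong solution $w\in W^{2,p}(\Omega)\cap W^{1,p}_0(\Omega)$ satisfying $\|w\|_{W^{2,p}(\Omega)}\leq C_\lambda\|F\|_{L^p(\Omega)}$.

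With this input, I would simply invoke Lemma \ref{red_lem}, which reduces \eqref{eq_bvp21} to \eqref{eq_bvp22} by subtracting the extension $Ef$ of the boundary data. The hypotheses of the lemma (unique solvability and the a priori bound \eqref{eq:red_est1}) are met for $\lambda\in\C\setminus(\lambda_0,\infty)$, so the lemma directly yields both the existence of a unique strong solution $u\in W^{2,p}(\Omega)$ to \eqref{eq_bvp21} and the desired estimate
\[
\|u\|_{W^{2,p}(\Omega)}\leq C_\lambda\|f\|_{B^{2-1/p}_{pp}(\p\Omega)}.
\]

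There is essentially no obstacle here; the content of the corollary is already contained in the combination of the previous two results, and the role of the corollary is organisational, packaging the reduction together with the solvability of the inhomogeneous problem. The only minor point to note is that the constant $C$ appearing in \eqref{eq:red_est1} is now $\lambda$-dependent (through $C_\lambda$ in \eqref{eq:strong_est}), which is why the final estimate \eqref{eq:strong2_est} also carries a $C_\lambda$.
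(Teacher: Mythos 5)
Your proposal is correct and is exactly the paper's argument: the corollary is stated in the paper without a separate proof, precisely as the combination of Proposition \ref{strong_prop} (unique solvability and the estimate \eqref{eq:strong_est} for the inhomogeneous problem) with the reduction in Lemma \ref{red_lem}. Your remark about the $\lambda$-dependence of the constant is also consistent with how the paper tracks it.
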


\medskip
\noindent
We end this section by formulating an $L^p$-apriori estimate
which has an explicit dependence on $\lambda$, when $\lambda \in \R$. 
This type of estimate was derived by Agmon in \cite{A1} (see Theorem 2.1.).
We need to modify it by adding  a $q\in L^{n/2}(\Omega)$ to the operator.
This estimate  is one of our main tools.

\begin{prop}\label{agmon_result}
Assume $\frac{2n}{n+2} \leq p < \infty$ and that 
$u \in W^{2,p}(\Omega) \cap W^{1,p}_0(\Omega)$. Then we have that
\begin{align} \label{eq:agmon_est}
    \sum_{j=0}^{2} |\lambda|^{\frac{2-j}{2}} \big \| u \big \|_{W^{j,p}{(\Omega)}}
    \leq C \big \| (-\Delta + q -\lambda)  u \big \|_{L^p{(\Omega)}},
\end{align}
when $-\lambda \in \R$ is large. The constant $C$ does not depend on $\lambda$.
\end{prop}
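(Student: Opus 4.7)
The plan is to deduce the estimate from Agmon's original unperturbed $L^p$-estimate for $-\Delta - \lambda$ (Theorem 2.1 of \cite{A1}), which asserts
\[
\sum_{j=0}^{2} |\lambda|^{(2-j)/2} \|u\|_{W^{j,p}(\Omega)} \leq C_0 \|(-\Delta - \lambda) u\|_{L^p(\Omega)}
\]
for $u \in W^{2,p}(\Omega) \cap W^{1,p}_0(\Omega)$, $p \in [2n/(n+2), \infty)$, and $-\lambda$ large, with $C_0$ independent of $\lambda$. Writing $-\Delta - \lambda = (-\Delta + q - \lambda) - q$ and applying the triangle inequality, the whole proof reduces to showing that $C_0 \|qu\|_{L^p(\Omega)}$ can be absorbed into the left-hand side once $|\lambda|$ is sufficiently large.

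The absorption is carried out by the standard truncation $q = q_1 + q_2$, where $q_1 := q\chi_{\{|q| \leq M\}} \in L^\infty(\Omega)$ and $q_2 := q - q_1$. By dominated convergence $\|q_2\|_{L^{n/2}(\Omega)} \to 0$ as $M \to \infty$, so for any prescribed $\epsilon > 0$ we may choose $M$ so that $\|q_2\|_{L^{n/2}} < \epsilon$. For the singular piece, Hölder's inequality with exponents $(n/2, r)$, $1/r = 1/p - 2/n$, combined with the Sobolev embedding $W^{2,p}(\Omega) \hookrightarrow L^r(\Omega)$ (with $r = 2n/(n-2)$ at the main endpoint $p = 2n/(n+2)$), yields
\[
\|q_2 u\|_{L^p(\Omega)} \leq \|q_2\|_{L^{n/2}(\Omega)} \|u\|_{L^r(\Omega)} \leq C_S \epsilon \|u\|_{W^{2,p}(\Omega)},
\]
so picking $\epsilon$ with $C_0 C_S \epsilon \leq 1/4$ absorbs this into the $j=2$ term. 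For the bounded piece, $\|q_1 u\|_{L^p} \leq M \|u\|_{L^p}$; since the left-hand side controls $|\lambda| \|u\|_{L^p}$, the choice $|\lambda| \geq 4 C_0 M$ absorbs this into the $j=0$ term. Together the two absorptions give the claim with $C = 2 C_0$.

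The main obstacle I foresee is keeping the Hölder--Sobolev step uniform across the whole range $p \in [2n/(n+2),\infty)$. At the endpoint $p = 2n/(n+2)$ the target $L^{2n/(n-2)}$ of the Sobolev embedding pairs exactly with $L^{n/2}$; for $p$ close to or above $n/2$ the naive conjugate exponent leaves the Sobolev range, so the Hölder triple has to be adjusted, e.g.\ by using $L^{n/2}(\Omega) \hookrightarrow L^a(\Omega)$ for $a < n/2$ on the bounded domain $\Omega$ paired with a Sobolev embedding into $L^b(\Omega)$ with $b < \infty$ large. In all cases the underlying scheme is the same: smallness of $\|q_2\|_{L^{n/2}}$ absorbs the $\|u\|_{W^{2,p}}$ contribution and the large factor $|\lambda|$ absorbs the $L^\infty$ contribution.
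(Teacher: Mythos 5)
Your argument is correct at the exponent that actually matters, and it takes a genuinely different route from the paper. You treat Agmon's $\lambda$-dependent estimate for the unperturbed Dirichlet Laplacian on the ray $\R_-$ as a black box and perturb it, absorbing $\|qu\|_{L^p(\Omega)}$ by splitting $q$ into a truncated bounded piece (absorbed into the $j=0$ term $|\lambda|\,\|u\|_{L^p(\Omega)}$ once $|\lambda|$ is large) and a piece small in $L^{n/2}(\Omega)$ (absorbed into the $j=2$ term via H\"older and Sobolev). The paper instead reconstructs the $\lambda$-dependence from scratch by Agmon's dummy-variable device: it sets $v(x,t)=\zeta(t)e^{i\mu t}u(x)$ with $\mu^2=-\lambda$ on the cylinder $D=\Omega\times(-1,1)$, applies only a $\lambda$-\emph{independent} a priori estimate on $D$, absorbs $\tilde q$ there by the same bounded-plus-small-in-$L^{n/2}$ splitting (using smooth approximants $\tilde q_k\to\tilde q$ rather than truncation), and then reads off the powers of $|\mu|$ from the explicit form of $\|v\|_{W^{2,p}(\Omega\times(-1/2,1/2))}$. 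Your route is shorter but leans on the full strength of Theorem 2.1 of \cite{A1}; the paper's route essentially re-runs the proof of that theorem with $q$ carried along, which is why it cites ``Theorem 2.1 and its proof.'' Both absorptions close for the same reason, and your constant-tracking ($C=2C_0$) is fine.

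One caveat, which you partly anticipated but whose proposed repair does not work: the H\"older--Sobolev step $\|q_2u\|_{L^p}\le\|q_2\|_{L^{n/2}}\|u\|_{L^r}\le C_S\epsilon\|u\|_{W^{2,p}}$ needs $1/r=1/p-2/n\ge 0$, i.e. $p\le n/2$, and shrinking the Lebesgue exponent on $q_2$ below $n/2$ only makes this worse: with $1/p=1/a+1/b$ and $a<n/2$ one gets $1/b=1/p-1/a<2/n-2/n\le 0$ whenever $p\ge n/2$, so no admissible $b$ exists. Indeed, for $p>n/2$ and a genuine $L^{n/2}$ potential the product $qu$ need not lie in $L^p$ at all, so the right-hand side of \eqref{eq:agmon_est} can be infinite and the inequality is then vacuous rather than false. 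This is not a defect relative to the paper: its own H\"older step on $D$ pairs $L^{n/2}$ with $L^{2n/(n-2)}$ and is therefore also only literally valid at $p=2n/(n+2)$, and the one place the proposition is invoked with a larger $p$ (namely $p=2n/(n-2)$ in Lemma \ref{c_indep_2}) has $q=0$, where the unperturbed Agmon estimate you start from applies directly.
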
 
\begin{proof} 
We begin by choosing a  $\mu \in \R_+$ s.t. $\mu^2 = -\lambda$.
Let $D= \Omega \times (-1,1)$ and let $\zeta \in C^\infty_0(-1,1)$ be s.t.
$\zeta(t) = 1$ when $|t| \leq 1/2$. Define $v(x,t):= \zeta(t) e^{i \mu t} u(x)$
and $\tilde q(x,t) := q(x)$.

By Theorem 2.1 and its proof in \cite{A1} (see also \cite{Ta}) there is a $\lambda_0$ s.t.
\[
    \|( -\Delta_x - \p_t^2 - \lambda_0) v \|_{L^p(D)}
    \geq C_0 \| v \|_{W^{2,p}(D)}.
\]
It follows that
\begin{align*}
    \|( -\Delta_x - \p_t^2 + \tilde q) v \|_{L^p(D)} &\geq  
    \|(-\Delta_x - \p_t^2- \lambda_0) v \|_{L^p(D)} - \|\tilde q v\|_{L^p(D)}\\
    &\quad-|\lambda_0| \| v \|_{L^p(D)}  \\
 &\geq  
C_0\| v \|_{W^{2,p}(D)} - \|\tilde q v\|_{L^p(D)} - |\lambda_0| \| v \|_{L^p(D)}. 
\end{align*}
Let $\tilde q_k \in C_0^\infty(D)$ be s.t. $\tilde q_k \to \tilde q$ in the $L^{n/2}(D)$-norm.
Then by the Hölder inequality and Sobolev embedding we have that 
\begin{align*}
\|\tilde q v\|_{L^p(D)} 
&\leq 
\|\tilde q- \tilde q_k \|_{L^{n/2}(D)} \| v \|_{L^{\frac{2n}{n-2}}(D)}
+\|\tilde q_k\|_{L^{\infty}(D)} \| v \|_{L^p(D)} \\
&\leq 
\frac{C_0}{2} \| v \|_{W^{2,p}(D)} + C_k \| v \|_{L^p(D)},
\end{align*}
when $k$ is choosen to be large enough. Combining the two previous estimates gives that
\begin{align*}
    \| ( -\Delta_x - \p_t^2 + \tilde q) v \|_{L^p(D)} &\geq  
    \frac{C_0}{2}\| v \|_{W^{2,p}(D)} - (C_k + |\lambda_0|) \| v \|_{L^p(D)}.
\end{align*}
Or in other words that
\begin{align} \label{eq_apriori}
    \| v \|_{W^{2,p}(D)} \leq C (\| (-\Delta_x-\p^2_t + \, \tilde q ) v \|_{L^p(D)} + \| v \|_{L^p(D)}),
\end{align}
for $p \geq 2n/(n+2)$
Using the definition of $v$ we have that
\[
    (-\Delta_x-\p^2_t) v = 
    \big(-\zeta \Delta_x u - (\zeta'' - \zeta \mu^2  + 2i\mu \zeta')u \big) e^{i \mu t},
\]
We can thus estimate  the first term on the right hand side of \eqref{eq_apriori} as
\begin{small}
\begin{align*}
     \| (-\Delta_x-\p^2_t + \,\tilde q  ) v \|_{L^p(D)} 
     &\leq C\big( \| (-\Delta_x  + q + \mu^2 ) u \|_{L^p(\Omega)} 
      + (1+|\mu|) \|u \|_{L^p(\Omega)} \big).
\end{align*}
\end{small}
Next consider the region $D' = \Omega \times (-\frac{1}{2}, \frac{1}{2})$. Since $\zeta = 1$ in $D'$,
we can write the Sobolev norm of $v$ as
\begin{align*}
 \| v \|_{W^{2,p}(D')}
    & = (1+|\mu|+ |\mu|^2)\| u  \|_{L^p(\Omega)} \\ & \quad+ \sum_{j=1}^n (1+|\mu|)\| \p_j u  \|_{L^p(\Omega)}  
    + \sum_{j\leq i}^n \| \p_i \p_j u  \|_{L^p(\Omega)}. 
\end{align*}
Estimating the left hand side of \eqref{eq_apriori} from below, 
by taking the $L^p(D')$-norm and temporarily dropping the 
$L^p$-norms of the derivatives in the above expression, gives
\begin{align*}
    |\mu|^2\| u \|_{L^p(\Omega)}  \leq
     C \big( \| (-\Delta_x  + q + \mu^2) u \|_{L^p(\Omega)} 
     + (2+|\mu|) \|u \|_{L^p(\Omega)} \big).
\end{align*}
We can absorb the second term on the right hand side by the first term by picking
a large $|\mu|$. We thus get that
\begin{align*}
    |\mu|^2\| u  \|_{L^p(\Omega)} \leq C \| (-\Delta_x + q - \lambda ) u\|_{L^p(\Omega)}.
\end{align*}
By adding back the $\| \p_j u  \|_{L^p(\Omega)}$ and the
$\| \p_k\p_j u  \|_{L^p(\Omega)}$, that we dropped from the left hand side, yields
\begin{small}
\begin{align*}
    |\mu|^2\| u  \|_{L^p(\Omega)} +
    |\mu|\| \p_j u  \|_{L^p(\Omega)} +  \| \p_i \p_j u  \|_{L^p(\Omega)} 
    \leq C \| (-\Delta_x + q - \lambda) u\|_{L^p(\Omega)},
\end{align*}
\end{small}
which implies the estimate of the claim.

\end{proof}

\noindent
Note that the above argument can be extended so that it applies to $\lambda$
that lie on certain rays emanating from the origin in $\C$ (see \cite{A1}).

\section{From spectral data to the Dirichlet-to-Neumann maps} \label{sec_s2DN} 

In this section we show that the spectral data determines the Dirichlet-to-Neumann map
$\Lambda_q(\lambda)$, when $-\lambda \in \R$ is large. 
This in conjunction with known results on the inverse boundary problem, 
will provide a proof for Theorem \ref{thm1}, which we spell out
at the end of the section.
\\

We begin by defining the Dirichlet-to-Neumann map.
Let $q \in L^{\frac{n}{2}}(\Omega,\R)$, $n\geq3$. 
Consider the Dirichlet problem
\begin{equation}
\begin{aligned}
\label{eq_bvp31}
(-\Delta+q - \lambda )u&=0, \quad\textrm{in}\quad \Omega, \\
u|_{\p\Omega}&=f, \quad\textrm{on}\quad \p\Omega,
\end{aligned}
\end{equation}
where
\begin{align*} 
f \in  B_{pp}^{2-1/p}(\p \Omega),
\end{align*}
with $p=2n/(n+2)$. 
The boundary value problem \eqref{eq_bvp31} has a unique solution 
$u \in W^{2,p}(\Omega)$ due to Lemma \ref{exi_uniq},
when $-\lambda$ is large.
The solution $u$ is furthermore bounded by the Dirichlet data, i.e.
\begin{align} \label{eq:intro_est}
    \|u\|_{W^{2,p}(\Omega)} \leq   C \|f\|_{B_{pp}^{2-1/p}(\p \Omega)}.
\end{align}
Recall that $\tilde \gamma$ is the Neumann trace operator, which gives meaning to 
the restriction $\nu \cdot \nabla u |_{\p \Omega}$ when $u$ is non-smooth.
We define the map $\Lambda_q$ as
\[
    \Lambda_q(\lambda) f := \tilde{\gamma}u, 
\]
where $u$ is the unique solution to \eqref{eq_bvp31} with boundary data $f$. The map
$\Lambda_q$ is called the Dirichlet-to-Neumann map or the DN-map for short.
Estimate 
\eqref{eq:intro_est} and the continuity of the Neumann trace operator,
shows that
$\Lambda_q(\lambda):  B_{pp}^{2-1/p}(\p \Omega)\to B_{pp}^{1-1/p}(\p \Omega)$ is continuous.

\medskip
\noindent
One of the main concerns of this section is to investigate 
the difference $[\Lambda_{q_1}(\lambda)-\Lambda_{q_2}(\lambda)]f$, as $\lambda \to -\infty$.
In considering the DN-maps we will need an estimate with an explicit dependence on $\lambda$
for the homogeneous problem \eqref{eq_bvp31}.
To this end it will be convenient to state the following Lemma, which is a direct consequence of 
Proposition \ref{agmon_result}.

\begin{lem}\label{c_indep_1} Let $w \in W^{2,p}(\Omega)$, $p=2n/(n+2)$ be a solution to \eqref{eq_bvp22}.
Then there is a constant $C$ independent of $\lambda$, for large $-\lambda \in \R$, such that
\begin{align} 
    &\| w  \|_{L^{p}{(\Omega)}} \leq C |\lambda|^{-1} \|F\|_{L^p(\Omega)}, \label{eq:lambda_est1}\\
    &\| w  \|_{W^{2,p}{(\Omega)}} \leq C \|F\|_{L^p(\Omega)}. \label{eq:lambda_est2}
\end{align}
\end{lem}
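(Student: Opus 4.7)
The plan is to apply Proposition \ref{agmon_result} directly and then extract the two desired inequalities by keeping single terms in the sum.

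First I would check that the hypothesis of Proposition \ref{agmon_result} is satisfied. Membership $w \in W^{2,p}(\Omega)$ is given, and the zero boundary condition $\gamma w = 0$ in \eqref{eq_bvp22} places $w$ in $W^{1,p}_0(\Omega)$, so $w \in W^{2,p}(\Omega) \cap W^{1,p}_0(\Omega)$. The value $p = 2n/(n+2)$ satisfies $\frac{2n}{n+2} \leq p < \infty$, so Proposition \ref{agmon_result} applies for $-\lambda$ sufficiently large, with a constant $C$ independent of $\lambda$.

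Next, since $(-\Delta + q - \lambda)w = F$ in $\Omega$, estimate \eqref{eq:agmon_est} gives
\[
    |\lambda| \, \|w\|_{L^p(\Omega)} + |\lambda|^{1/2}\,\|w\|_{W^{1,p}(\Omega)} + \|w\|_{W^{2,p}(\Omega)} \leq C\,\|F\|_{L^p(\Omega)}.
\]
The second claim \eqref{eq:lambda_est2} is obtained immediately by discarding the first two (nonnegative) terms on the left-hand side. The first claim \eqref{eq:lambda_est1} follows by discarding the last two terms on the left, which yields $|\lambda|\,\|w\|_{L^p(\Omega)} \leq C\,\|F\|_{L^p(\Omega)}$, and then dividing by $|\lambda|$.

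Since each step is a direct consequence of Proposition \ref{agmon_result} together with the zero boundary condition, there is no real obstacle in this lemma; all the analytic work has already been carried out in the proof of that proposition. The role of this lemma is simply to package the two extremal ($j=0$ and $j=2$) cases of \eqref{eq:agmon_est} in the form that will be convenient in the next section.
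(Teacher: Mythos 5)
Your proposal is correct and matches the paper's intent exactly: the paper states this lemma without proof as "a direct consequence of Proposition \ref{agmon_result}," and your argument—verifying $w \in W^{2,p}(\Omega)\cap W^{1,p}_0(\Omega)$, substituting $(-\Delta+q-\lambda)w = F$ into \eqref{eq:agmon_est}, and reading off the $j=0$ and $j=2$ terms—is precisely that direct consequence.
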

The estimate of the next Lemma is similar to the estimate of Lemma \ref{exi_uniq}.
Here we need  the constant in the estimate to be independent of $\lambda$, which requires 
some additional effort.

\begin{lem}\label{c_indep_2} Suppose $u \in W^{2,p}(\Omega)$, $p=2n/(n+2)$
solves  \eqref{eq_bvp31}.
Then there exists a constant $C$ independent of $\lambda$, such that
\begin{align}\label{eq:c_indep_est}
    \|u\|_{ L^{\frac{2n}{n-2}} (\Omega)} \leq C \| f \|_{ B_{pp}^{2-1/p}(\p\Omega)}.
\end{align}
\end{lem}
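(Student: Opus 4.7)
The plan is to prove the bound by an $L^p$--duality argument, transferring the estimate for $u$ (which has nonzero Dirichlet data but zero source) to the zero-Dirichlet inhomogeneous problem, where Lemma \ref{c_indep_1} already supplies $\lambda$-uniform bounds. Since $L^{2n/(n-2)}(\Omega)$ is dual to $L^p(\Omega)$ with $p=2n/(n+2)$,
\[
\|u\|_{L^{2n/(n-2)}(\Omega)} = \sup \left\{\,\Big|\int_\Omega u\, g\Big| \,:\, g \in L^p(\Omega),\ \|g\|_{L^p(\Omega)} \leq 1\,\right\},
\]
so it suffices to prove $|\int_\Omega u g| \leq C\|f\|_{B^{2-1/p}_{pp}(\p\Omega)}\|g\|_{L^p(\Omega)}$ with $C$ independent of $\lambda$.

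For each such $g$, Proposition \ref{strong_prop} produces, for $-\lambda$ sufficiently large, a unique strong solution $\phi \in W^{2,p}(\Omega) \cap W^{1,p}_0(\Omega)$ of
\[
(-\Delta + q - \lambda)\phi = g \quad \text{in } \Omega, \qquad \gamma\phi = 0 \quad \text{on } \p\Omega,
\]
and estimate \eqref{eq:lambda_est2} of Lemma \ref{c_indep_1} gives the $\lambda$-uniform bound $\|\phi\|_{W^{2,p}(\Omega)} \leq C\|g\|_{L^p(\Omega)}$. I would then apply Green's identity to $u$ and $\phi$, using $(-\Delta+q-\lambda)u=0$ in $\Omega$ and $\gamma\phi=0$ on $\p\Omega$, to see that all bulk and Neumann-trace terms cancel, leaving
\[
\int_\Omega u\, g = -\int_{\p\Omega} f \cdot \tilde\gamma\phi.
\]
At this regularity, the identity is justified by approximating both $u$ and $\phi$ by smooth functions in $W^{2,p}(\Omega)$ preserving their traces and passing to the limit; all integrals are finite, since $\nabla u,\nabla\phi \in L^2(\Omega)$ (by Sobolev embedding on $W^{2,p}$ when $p=2n/(n+2)$) and $qu\phi \in L^1(\Omega)$ by Hölder with $q \in L^{n/2}(\Omega)$ and $u,\phi \in L^{2n/(n-2)}(\Omega)$.

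The remaining task is to bound $|\int_{\p\Omega} f\,\tilde\gamma\phi|$ with a constant independent of $\lambda$. The trace theorem from section \ref{sec:app2} gives $\|\tilde\gamma\phi\|_{B^{1-1/p}_{pp}(\p\Omega)} \leq C\|\phi\|_{W^{2,p}(\Omega)} \leq C\|g\|_{L^p(\Omega)}$. Sobolev embedding for Besov spaces on the $(n-1)$-dimensional boundary yields
\[
B^{2-1/p}_{pp}(\p\Omega) \hookrightarrow L^{r_1}(\p\Omega), \qquad B^{1-1/p}_{pp}(\p\Omega) \hookrightarrow L^{r_2}(\p\Omega),
\]
with $r_1 = 2(n-1)/(n-2)$ and $r_2 = 2(n-1)/n$, and the decisive arithmetic is that $1/r_1 + 1/r_2 = 1$. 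Hölder on $\p\Omega$ then closes the pairing,
\[
\left|\int_{\p\Omega} f \cdot \tilde\gamma\phi\right| \leq \|f\|_{L^{r_1}(\p\Omega)} \|\tilde\gamma\phi\|_{L^{r_2}(\p\Omega)} \leq C\|f\|_{B^{2-1/p}_{pp}(\p\Omega)}\|g\|_{L^p(\Omega)},
\]
and taking the supremum over $g$ with $\|g\|_{L^p} \leq 1$ delivers \eqref{eq:c_indep_est}.

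The main obstacle is the exponent bookkeeping on the boundary: one must verify that the two Besov--Lebesgue embeddings land on Hölder-dual exponents. That they do is precisely the critical-scaling feature of the choice $p = 2n/(n+2)$ that pervades this section, and it is exactly what allows the $\lambda$-uniform $W^{2,p}$-estimate of Lemma \ref{c_indep_1} to be converted into a $\lambda$-uniform estimate at the Sobolev-critical exponent $2n/(n-2)$. A secondary point, handled by the usual density argument, is the validity of Green's identity at the given $W^{2,p}$ regularity.
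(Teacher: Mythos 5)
Your proof is correct, but it takes a genuinely different route from the paper. The paper argues directly on $u$: it splits $u=v_0+v_1$ with $v_0$ solving the potential-free problem with data $f$ and $v_1$ the zero-trace correction, then splits $v_0=w_0+w_1$ around a fixed reference parameter $\tilde\lambda$, and controls $w_1$ (whose source is $(\lambda-\tilde\lambda)w_0$) by invoking the Agmon estimate \eqref{eq:agmon_est} at the \emph{upper} exponent $2n/(n-2)$, where the factor $|\lambda|^{-1}$ exactly cancels the growth of the source. You instead dualize: since $L^{2n/(n-2)}=(L^{2n/(n+2)})'$, you pair $u$ against $g$, solve the zero-Dirichlet adjoint problem for $\phi$, and reduce everything to the $\lambda$-uniform $W^{2,p}$ bound of Lemma \ref{c_indep_1} at the \emph{lower} exponent $p=2n/(n+2)$ plus a $\lambda$-independent boundary pairing. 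Your exponent bookkeeping is right: with $d=n-1$, the embeddings $B^{2-1/p}_{pp}(\p\Omega)\hookrightarrow L^{2(n-1)/(n-2)}(\p\Omega)$ and $B^{1-1/p}_{pp}(\p\Omega)\hookrightarrow L^{2(n-1)/n}(\p\Omega)$ land on Hölder-dual exponents, and both are legitimate critical Besov embeddings since the third index $p=2n/(n+2)$ does not exceed $\min(r,2)$ in either case. The trade-off is that your argument needs these boundary embeddings and a density justification of Green's identity at $W^{2,p}$ regularity (both fine, since $\nabla u,\nabla\phi\in L^2$ and $qu\phi\in L^1$), but it avoids the second splitting and the use of Proposition \ref{agmon_result} at $p=2n/(n-2)$ entirely. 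One small caveat: your Green's identity uses that $-\Delta+q-\lambda$ is its own formal adjoint, which is valid here because the lemma is only applied for real $\lambda$ and real $q$; for complex $\lambda$ you would have to solve the adjoint problem with $\overline{\lambda}$.
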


\begin{proof} 
Decompose $u$ as $u=v_0 + v_1$ where
\begin{equation}
\begin{aligned}
\label{eq:bvp5}
(-\Delta- \lambda ) v_0 &= 0 \quad\textrm{in}\quad \Omega, \\
\gamma v_0 &= f \quad\textrm{on}\quad \p\Omega.
\end{aligned}
\end{equation}
And 
\begin{equation}
\begin{aligned}
\label{eq:bvp6}
(-\Delta + q - \lambda ) v_1 &= -q v_0 \quad\textrm{in}\quad \Omega, \\
\gamma v_1 &= 0 \quad\textrm{on}\quad \p\Omega.
\end{aligned}
\end{equation}
By estimate \eqref{eq:lambda_est2} in Lemma \ref{c_indep_1} and Sobolev embedding we have that
\begin{align*}
 \|v_1\|_{ L^{\frac{2n}{n-2}} (\Omega)} \leq C 
 \|v_1\|_{ W^{2,p} (\Omega)} 
 &\leq C \| q v_0 \|_{L^{\frac{2n}{n+2}}(\Omega)} \\
                                         &\leq C \| q\|_{\frac{n}{2}(\Omega)}  \| v_0 \|_{\frac{2n}{n-2}(\Omega)},
\end{align*}
where $C$, independent of $\lambda$, for large $-\lambda$. It is therefore enough to obtain 
a bound on the $L^p$-norm for $v_0$, with $p=2n/(n-2)$ with a constant that is independent of $\lambda$.
We do this by making a second splitting. We set $v_0 = w_0 + w_1$, where
\begin{equation}
\begin{aligned}
\label{eq:bvp8}
(-\Delta - \tilde{\lambda} ) w_0 &= 0 \quad\textrm{in}\quad \Omega, \\
\gamma w_0 &= f \quad\textrm{on}\quad \p\Omega.
\end{aligned}
\end{equation}
where $\tilde{\lambda}$ is chosen with $-\tilde{\lambda}$ so large that the problem 
has a unique solution. The function $w_1$ solves 
\begin{equation}
\begin{aligned}
\label{eq:bvp9}
(-\Delta - \lambda ) w_1 &= (\lambda - \tilde{\lambda})w_0 \quad\textrm{in}\quad \Omega, \\
\gamma w_1 &= 0 \quad\textrm{on}\quad \p\Omega.
\end{aligned}
\end{equation}
We begin by estimating $w_0$, with writing it by means of an inhomogeneous problem, i.e. we
set $w_0 = \tilde{w} + Ef$, where $\tilde{w}$ is the unique solution to
\begin{equation}
\begin{aligned}
\label{eq:bvp10}
(-\Delta - \tilde{\lambda} ) \tilde{w} &= (\Delta + \tilde{\lambda})Ef \quad\textrm{in}\quad \Omega, \\
\gamma \tilde{w} &= 0 \quad\textrm{on}\quad \p\Omega.
\end{aligned}
\end{equation}
For $\tilde{w}$ we have by Proposition \ref{strong_prop} and the continuity of right inverse of 
the trace operator, that
\begin{align*}
    \| \tilde{w}\|_{W^{2,p}(\Omega)} &\leq C \| (\Delta + \tilde{\lambda})Ef \|_{L^{p}(\Omega)} \\
    &\leq C \| (\Delta + \tilde{\lambda})Ef \|_{L^{p}(\Omega)}  \\
    &\leq C \| Ef \|_{W^{2,p}(\Omega)} + |\tilde{\lambda}| \big\| Ef \big\|_{L^{p}(\Omega)}  \\
    &\leq C \| f \|_{B_{pp}^{2-1/p}(\p\Omega)}.
\end{align*}
Notice that the constant $C$ is independent of $\lambda$.
The above gives with Sobolev embedding the estimate
\begin{align*}
    \| w_0 \|_{L^\frac{2n}{n-2}(\Omega)} \leq C 
             \| w_0 \|_{W^{2,p}(\Omega)} &\leq C 
 \| \tilde{w}\|_{W^{2,p}(\Omega)} + \| Ef \|_{L^{p}(\Omega)} \\
    &\leq C \| f \|_{B_{pp}^{2-1/p}(\p\Omega)}.
\end{align*}
Now we can apply \eqref{eq:agmon_est} with \eqref{eq:bvp9} in the case $p=2n/(n-2)$.
This gives
\begin{align*}
    \| w_1 \|_{L^\frac{2n}{n-2}(\Omega)} &\leq 
    \frac{C}{|\lambda|}\| (\lambda- \tilde{\lambda}) w_0 \|_{L^\frac{2n}{n-2}(\Omega)} \\
    &\leq C \| w_0 \|_{L^\frac{2n}{n-2}(\Omega)}  \\
    &\leq C \| f \|_{B_{pp}^{2-1/p}(\p\Omega)}.
\end{align*}
Thus because $v_0 = w_0 + w_1$, we have that
\begin{align*}
    \| v_0 \|_{L^\frac{2n}{n-2}(\Omega)}
    \leq C \| f \|_{B_{pp}^{2-1/p}(\p\Omega)},
\end{align*}
where the constant $C$ is independent of $\lambda$, for large $-\lambda$.

\end{proof}

Next we examine the difference of the Dirichlet-to-Neumann maps, as $\lambda \to -\infty$.

\begin{prop} \label{DNmap_lim}
Let $p=2n/(n+2)$ and $\varepsilon > 0$. Then
\begin{align} \label{eq:DNmap_lim}
    \|\Lambda_{q_1}(\lambda) - \Lambda_{q_2}(\lambda) \|_{op} \to 0,
\end{align}
when $\lambda \to - \infty$ and $0<\varepsilon \ll 1$,
where $\| \cdot \|_{op}$ denotes the operator norm on the 
space 
$\mathcal{L} ( B_{pp}^{2-1/p}(\p\Omega),\;B_{pp}^{1-1/p-\varepsilon}(\p\Omega))$,
of bounded linear operators. 
\end{prop}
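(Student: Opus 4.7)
The plan is to let $u_j \in W^{2,p}(\Omega)$ denote the unique strong solution of \eqref{eq_bvp31} with potential $q_j$ and boundary data $f$, and to set $w := u_1 - u_2$. Then $w \in W^{2,p}(\Omega) \cap W^{1,p}_0(\Omega)$ solves
\[
(-\Delta + q_1 - \lambda) w = (q_2 - q_1) u_2 \quad \text{in}\ \Omega, \qquad \gamma w = 0 \ \text{on}\ \p\Omega,
\]
and by construction $\tilde\gamma w = [\Lambda_{q_1}(\lambda) - \Lambda_{q_2}(\lambda)] f$. The task therefore reduces to estimating $\tilde\gamma w$ in $B^{1-1/p-\varepsilon}_{pp}(\p\Omega)$ with a quantitative decay in $\lambda$.

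First I would estimate the right-hand side by H\"older's inequality and Sobolev embedding, using the identity $1/p = 2/n + (n-2)/(2n)$:
\[
\|(q_2 - q_1) u_2\|_{L^p(\Omega)} \leq \|q_1 - q_2\|_{L^{n/2}(\Omega)} \|u_2\|_{L^{2n/(n-2)}(\Omega)}.
\]
The crucial input here is Lemma \ref{c_indep_2}, which provides $\|u_2\|_{L^{2n/(n-2)}(\Omega)} \leq C\|f\|_{B^{2-1/p}_{pp}(\p\Omega)}$ with $C$ \emph{independent} of $\lambda$ for $-\lambda$ large; this way the source term is controlled uniformly in $\lambda$.

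Next I would apply Proposition \ref{agmon_result} to $w$ in order to extract the two-scale a priori bound
\[
|\lambda|^{1/2} \|w\|_{W^{1,p}(\Omega)} + \|w\|_{W^{2,p}(\Omega)} \leq C \|(q_2 - q_1) u_2\|_{L^p(\Omega)} \leq C \|f\|_{B^{2-1/p}_{pp}(\p\Omega)},
\]
with $C$ independent of $\lambda$. The half-derivative of $\lambda$-decay available in $W^{1,p}(\Omega)$ would then be traded against the uniform $W^{2,p}$-bound through the standard fractional Sobolev interpolation inequality, yielding
\[
\|w\|_{W^{2-\varepsilon,p}(\Omega)} \leq C \|w\|_{W^{1,p}(\Omega)}^{\varepsilon} \|w\|_{W^{2,p}(\Omega)}^{1-\varepsilon} \leq C|\lambda|^{-\varepsilon/2}\|f\|_{B^{2-1/p}_{pp}(\p\Omega)}.
\]
Finally, the continuity of the Neumann trace $\tilde\gamma : W^{2-\varepsilon,p}(\Omega) \to B^{1-1/p-\varepsilon}_{pp}(\p\Omega)$, valid for $\varepsilon$ sufficiently small (see section \ref{sec:app2}), gives
\[
\|[\Lambda_{q_1}(\lambda) - \Lambda_{q_2}(\lambda)] f\|_{B^{1-1/p-\varepsilon}_{pp}(\p\Omega)} \leq C|\lambda|^{-\varepsilon/2} \|f\|_{B^{2-1/p}_{pp}(\p\Omega)},
\]
and taking the supremum over the unit ball of $B^{2-1/p}_{pp}(\p\Omega)$ and letting $\lambda \to -\infty$ yields \eqref{eq:DNmap_lim}.

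The main obstacle I expect is the bookkeeping at the interpolation-and-trace step: one must verify that the exact Besov target $B^{1-1/p-\varepsilon}_{pp}(\p\Omega)$ on the boundary is reached with precisely the $\varepsilon$-loss claimed, and that $\varepsilon$ can be taken arbitrarily small for all $n \geq 3$ (this is where one uses $\varepsilon \ll 1$). Once the functional-analytic framework for $W^{s,p}$-interpolation and fractional Neumann traces from section \ref{sec:app2} is in place, everything else is a clean assembly of Proposition \ref{agmon_result} (delivering the $|\lambda|^{-1/2}$ decay in $W^{1,p}$) and Lemma \ref{c_indep_2} (delivering the $\lambda$-uniform $L^{2n/(n-2)}$ control of $u_2$).
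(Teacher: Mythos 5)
Your proposal is correct and follows essentially the same route as the paper: the same decomposition $w=u_1-u_2$, the same use of H\"older with Lemma \ref{c_indep_2} to control the source term uniformly in $\lambda$, the same interpolation-plus-Neumann-trace conclusion with the rate $|\lambda|^{-\varepsilon/2}$. The only cosmetic difference is that you interpolate between $W^{1,p}(\Omega)$ and $W^{2,p}(\Omega)$ using the $|\lambda|^{-1/2}$ gain from Proposition \ref{agmon_result}, whereas the paper interpolates between $L^{p}(\Omega)$ and $W^{2,p}(\Omega)$ using the $|\lambda|^{-1}$ gain of Lemma \ref{c_indep_1}; both give the same decay.
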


\begin{proof} 
Let in $u_j\in W^{2,p}(\Omega)$, $j=1,2$ solve
\begin{equation}
\begin{aligned}
\label{eq:bvp_uj}
(-\Delta+q_j - \lambda )u_j&=0\quad\textrm{in}\quad \Omega, \\
\gamma u_j&=f \quad\textrm{on}\quad \p\Omega,
\end{aligned}
\end{equation}
    with $f \in B_{pp}^{2-1/p}(\p \Omega)$. Define $u := u_1-u_2$. Then $u$ will
solve
\begin{equation}
\begin{aligned}
\label{eq:bvp_u}
(-\Delta + q_1 - \lambda )u&= (q_2-q_1)u_2  \quad\textrm{in}\quad \Omega, \\
\gamma u &= 0 \quad\textrm{on}\quad \p\Omega.
\end{aligned}
\end{equation}
Estimate \eqref{eq:lambda_est1} of Lemma \ref{c_indep_1}, the Hölder inequality and the estimate
\eqref{eq:c_indep_est} for non zero boundary conditions, gives us now that
\begin{align} \label{eq:intpol_1}
    \| u  \|_{L^p(\Omega)} 
    &\leq \frac{C}{|\lambda|} \| u_2 \|_{L^{\frac{2n}{n-2}}(\Omega)} \| q_2-q_1\|_{L^{\frac{n}{2}}(\Omega)} \\
    &\leq \frac{C}{|\lambda|} \| f  \|_{B_{pp}^{2-1/p}(\p\Omega)} \nonumber ,
\end{align}
where $C$ is independent of $\lambda$, when $-\lambda$ is large.

Estimate \eqref{eq:lambda_est2}, the Hölder inequality and estimate
\eqref{eq:c_indep_est} give us likewise that
\begin{align} \label{eq:intpol_2}
\| u  \|_{W^{2,p}(\Omega)} 
\leq C \| u_2 \|_{L^{\frac{2n}{n-2}}(\Omega)} \| q_2-q_1\|_{L^{\frac{n}{2}}(\Omega)}
    \leq C \| f  \|_{B_{pp}^{2-1/p}(\p\Omega)},
\end{align}
where in both estimates the constants $C$ are independent of $\lambda$, when $-\lambda$ is large.

We also need the basic interpolation property of Sobolev spaces 
according to which
\begin{align} \label{eq:intpol_3}
    \| u\|_{W^{s,p}(\Omega)} \leq C \|u\|_{L^p(\Omega)}^{1-\frac{s}{2}} \|u\|_{W^{2,p}(\Omega)}^\frac{s}{2},
\end{align}
for $s \in (0,2)$.

From the definition of $u$ it follows that
\begin{align} \label{eq:DNmap_diff}
    \| \Lambda_{q_1}(\lambda)f - \Lambda_{q_2}(\lambda)f \|_{B_{pp}^{1-1/p-\varepsilon}(\p\Omega) }
    = \| \tilde{\gamma} u \|_{B_{pp}^{1-1/p-\varepsilon}(\p\Omega) },
\end{align}
To estimate the right hand side of \eqref{eq:DNmap_diff}, we use the definition 
of the Neumann trace operator. 
By the continuity of the normal traces, property \eqref{eq:intpol_3} and using estimates  
\eqref{eq:intpol_1} and \eqref{eq:intpol_2} we have  that
\begin{align*}
    \big\| \nu \cdot \nabla u|_{\p \Omega}\big \|_{B_{pp}^{1-1/p-\varepsilon}(\p\Omega) } 
    &\leq C  \| u\|_{W^{2 - \varepsilon,p}(\Omega)} \\
    &\leq C \|u\|_{L^p(\Omega)}^{\frac{\varepsilon}{2}} \|u\|_{W^{2,p}(\Omega)}^{1-\frac{\varepsilon}{2}}  \\
    &\leq \frac{C}{|\lambda|^\frac{\varepsilon}{2}}\| f  \|_{B_{pp}^{2-1/p}(\p\Omega)}
    \to 0,
\end{align*}
when $\lambda \to -\infty$. This together with \eqref{eq:DNmap_diff} shows that 
that the claim holds.

\end{proof}

We will prove Theorem \ref{thm1}, following the ideas in \cite{NSU}, \cite{C}
and \cite{KP}. We can view $\Lambda(\lambda)f$ as a holomorphic function of $\lambda$,
when $\lambda \not \in \spec ( -\Delta +q)$.
The following Lemma will imply 
that $[\Lambda_{q_1}(\lambda) -\Lambda_{q_2}(\lambda)]f$ is a polynomial in
a half-plane $\Re \lambda \leq \lambda_0$. The proof follows the argument in \cite{KP}.

\begin{lem}\label{lem_taylor} 
    Suppose the assumptions of Theorem \ref{thm1} hold and $f \in B^{2-1/p}_{pp}(\p\Omega)$.
    For every $m \in \N$, $m>(n+4)/2$ and $\lambda$, s.t. $-\lambda$ is large,
    we have that
    \[ 
        \frac{d^m}{d\lambda^m} \big[\, \Lambda_{q_1}(\lambda)f-\Lambda_{q_2}(\lambda)f\,\big] = 0.  
    \]
\end{lem}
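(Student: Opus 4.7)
My plan is to obtain an explicit spectral expansion for $\Lambda_q(\lambda) f$ whose $m$-th derivative manifestly depends only on $\lambda_{q,k}$ and $\tilde\gamma\varphi_{q,k}$. For $-\lambda$ large (so $\lambda \notin \spec(-\Delta+q)$), let $u(\lambda)\in W^{2,p}(\Omega)$, $p=2n/(n+2)$, be the unique strong solution from Corollary \ref{exi_uniq} to \eqref{eq_bvp31}. Testing the equation $(-\Delta+q-\lambda)u=0$ against an eigenfunction $\varphi_{q,k}$ and applying Green's identity (using $\gamma u = f$ and $\gamma\varphi_{q,k} = 0$) gives
\[
\langle u,\varphi_{q,k}\rangle_{L^2(\Omega)} = -\frac{c_k(f)}{\lambda_{q,k}-\lambda}, \qquad c_k(f) := \int_{\p\Omega} f\,\tilde\gamma\varphi_{q,k}\,dS.
\]
Since $u\in L^2(\Omega)$ by Sobolev embedding, completeness of $\{\varphi_{q,k}\}$ yields the spectral representation $u(\lambda) = -\sum_k c_k(f)(\lambda_{q,k}-\lambda)^{-1}\varphi_{q,k}$.

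Next I would repeatedly differentiate in $\lambda$. Since $f$ is independent of $\lambda$, $\p_\lambda u$ satisfies $(-\Delta+q-\lambda)\p_\lambda u = u$ with zero Dirichlet data, so $\p_\lambda u = R_q(\lambda) u$ and, by induction, $\p_\lambda^m u = m!\,R_q(\lambda)^m u$. Applying $R_q(\lambda)^m$ to the spectral expansion and taking the Neumann trace (pending analytic justification) yields
\[
    \frac{d^m}{d\lambda^m}\Lambda_q(\lambda) f = -m!\sum_k \frac{c_k(f)}{(\lambda_{q,k}-\lambda)^{m+1}}\tilde\gamma\varphi_{q,k}.
\]
Once this representation is in hand, the lemma follows immediately: the hypotheses $\lambda_{q_1,k} = \lambda_{q_2,k}$ and $\tilde\gamma\varphi_{q_1,k} = \tilde\gamma\varphi_{q_2,k}$ force the coefficients $c_k(f)$ to coincide for $q_1$ and $q_2$, so term-by-term cancellation of the two series gives the vanishing claimed.

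The main obstacle is the analytic justification of the displayed formula, i.e. showing that the series converges in a norm strong enough to commute $\tilde\gamma$ with summation. I would obtain polynomial-in-$\lambda_{q,k}$ bounds on $\|\tilde\gamma\varphi_{q,k}\|$ by applying Proposition \ref{agmon_result} to $\varphi_{q,k}$ (treating the eigenvalue equation as an inhomogeneous Dirichlet problem) and invoking continuity of $\tilde\gamma\colon W^{2,p}(\Omega)\to B^{1-1/p}_{pp}(\p\Omega)$, and then combine these with Weyl's asymptotic $\lambda_{q,k}\sim c\,k^{2/n}$ to estimate the tail of the series. The restriction $m>(n+4)/2$ is precisely what makes the powers of $k$ contributed by the denominator outpace those produced by the numerator, yielding absolute convergence in the target Besov space on $\p\Omega$ and legitimising the term-by-term differentiation and trace.
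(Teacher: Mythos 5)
Your proposal is correct and follows essentially the same route as the paper: both establish a spectral expansion of $u_q(\lambda)$ whose coefficients depend only on $\lambda_{q,k}$ and $\tilde\gamma\varphi_{q,k}$, differentiate $m$ times, and use the Weyl law together with the eigenfunction bound $\|\varphi_k\|_{W^{2,p}(\Omega)}\lesssim\lambda_k$ to get convergence in $W^{2,p}(\Omega)$ so that $\tilde\gamma$ can be applied term by term. The only (cosmetic) difference is that you obtain the boundary-integral coefficients $c_k(f)=\int_{\p\Omega}f\,\tilde\gamma\varphi_{q,k}\,dS$ directly via Green's identity, whereas the paper first writes $u_q$ through the resolvent applied to an extension $F$ of $f$ and only later identifies the coefficients $\langle(q+\tilde\lambda-\lambda_k)F,\varphi_k\rangle$ as the same boundary integrals by integration by parts.
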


\begin{proof}
Let $\lambda \in \C$ s.t. $ \Re \lambda \leq \lambda_0$, where $\lambda_0$ is s.t. 
there is a unique solution $u_{q}$ to the problem
\begin{equation} 
\begin{aligned}
\label{eq_bvp32}
    (-\Delta + q - \lambda )  u_{q} &= 0, \\
    \gamma  u_{q} &= f,
\end{aligned}
\end{equation}
for all $\lambda $ with $ \Re \lambda \leq \lambda_0$.
The solution $u_{q}$ can in general be expressed by means of the resolvent, by picking a extension 
$Ef$, of $f$ to $\Omega$ and then setting 
\begin{align} \label{eq_res_rep}
    u_{q} = Ef - R_q(\lambda)(-\Delta + q -\lambda)Ef.
\end{align}
Fix a $\tilde \lambda \in \R$,  such that $\tilde \lambda < \lambda$.
Consider an  extension $F\in W^{2,p}(\Omega)$ of $f$ to $\Omega$, given by the solution to the problem
\begin{align*}
    (-\Delta - \tilde \lambda )  F &= 0, \\
    \gamma  F &= f.
\end{align*}
Using \eqref{eq_res_rep} one can  write $u_q$ as
\begin{align*}
    u_{q} &= F - R_q(\lambda)(-\Delta + q - \lambda ) F \\
      &= F - \sum_{k} \frac{1}{\lambda_k - \lambda} 
      \big\langle (q +\tilde \lambda - \lambda ) F, \vp_k \big\rangle \vp_k 
\end{align*}
Now writing $F$  as  the series $ \sum_k (\vp_k,F)\vp_k$, we get that
\begin{align*} 
    u_{q} 
      &=  - \sum_{k} \frac{1}{\lambda_k - \lambda} 
      \big\langle (q +\tilde \lambda - \lambda_k ) F, \vp_k\big\rangle \vp_k.
\end{align*}
Taking the derivative in $\lambda$, gives then
\begin{align}\label{eq_urep}
    \frac{d^m}{d\lambda^m} u_{q}(\lambda) 
    &=  - m! \sum_{k} \frac{1}{(\lambda_k - \lambda)^{m+1}} 
    \big \langle (q +\tilde \lambda - \lambda_k ) F, \vp_k\big\rangle \vp_k
\end{align}
The sum in \eqref{eq_urep} converges in $L^2(\Omega)$ for every $m \in \N$. We need to show that it also
converges in $W^{2,p}(\Omega)$, for large $m$. 

Firstly by the Weyl law of Proposition \ref{prop_weyl} and estimate \eqref{eq_eigest} we know that 
\[
    \lambda_k \sim k^{2/n} \quad\text{ and }\quad \|\vp_k\|_{W^{2,p}(\Omega)} \leq C \lambda_k, 
\]
when $k$ is large.
Notice that $|\lambda_k - \lambda| \gtrsim |\lambda_k|$.
We can estimate the Sobolev norm of the individual  terms in 
the \eqref{eq_urep}, for large $k$ using these observations in the following manner
\begin{align*}
    \Big \| &\frac{1}{(\lambda_k - \lambda)^{m+1}} 
    \big \langle (q +\tilde \lambda - \lambda_k ) F, \vp_k\big\rangle \vp_k \Big \|_{W^{2,p}(\Omega)} \\
       &\leq C 
        \frac{1}{|\lambda_k|^{m+1}} 
       \|F\|_{L^\frac{2n}{n-2}(\Omega)}\|q+\tilde \lambda - \lambda_k\|_{L^{n/2}(\Omega)}
       \|\vp_k\|^2_{W^{2,p}(\Omega)} \\
       &\leq C
       k^\frac{-2(m+1)}{n} k^\frac{6}{n}.
\end{align*}
We need thus to choose $m$, so that $m > (n+4)/2$ in order to make the series in \eqref{eq_urep} converge in the
$W^{2,p}(\Omega)$-norm. 
It follows that 
\begin{align} \label{eq_Lambdarep}
    \tilde \gamma \frac{d^m}{d\lambda^m} u_{q}(\lambda) 
    &=  - m! \sum_{k} \frac{1}{(\lambda_k - \lambda)^{m+1}} 
    \big\langle (q +\tilde \lambda - \lambda_k ) F, \vp_k\big\rangle \tilde\gamma \vp_k ,
\end{align}
converges in the $L^2(\Omega)$-norm, when $m$ is chosen large enough.

The claim follows from \eqref{eq_Lambdarep}, since
$\tilde\gamma \vp_{q_1,k}= \tilde\gamma \vp_{q_2,k}$ and $\lambda_{q_1,k} = \lambda_{q_2,k}$, for 
every k, we have  that $\p_\lambda^m [\Lambda_{q_1} - \Lambda_{q_2}] (\lambda) f= 0$,
when $m$ is large enough.

\end{proof}

The above proof shows furthermore that the 
function $\lambda \mapsto \Lambda_{q}(\lambda)f$, is holomorphic in a half-plane
$\Re \lambda \leq \lambda_0 \in (-\infty,0)$, for some $\lambda_0$, since 
by \eqref{eq_Lambdarep}  we have a complex derivative  
$\p_\lambda^m \Lambda_{q} (\lambda) f= \nu \cdot \nabla \p_\lambda^m u_{q}(\lambda)|_{\p \Omega}$ exists,
when $m$ is  large enough, from which it follows that it exists for every $m \in \N$.

Lemma \ref{lem_taylor} implies that $[\Lambda_{q_1}(\lambda) - \Lambda_{q_2}(\lambda)]f$ 
is a polynomial in $\lambda$.
Lemma \ref{DNmap_lim} shows on the other hand that this polynomial goes to zero, as $\lambda \to -\infty$.
It follows that the polynomial in question is zero, so that $\Lambda_{q_1}(\lambda) = \Lambda_{q_2}(\lambda)$
for a fixed and large enough $-\lambda$.
It is however known that the Dirichlet-to-Neumann map determines uniquely a potential $q$ that is
in $L^{n/2}(\Omega)$ (see \cite{LN} and also \cite{Cha}, \cite{DKS}, \cite{KU}). It follows that $q_1 = q_2$,
which proves Theorem \ref{thm1}.

\section{Incomplete spectral data} \label{sec_incomplete}

\noindent
In this section we consider the case of incomplete spectral data and
prove Theorem \ref{thm2} by adapting the ideas in \cite{I}
to case when $q \in L^p(\Omega)$, with $p>n/2$, if $n=3$ and $p=n/2$, if $n\geq4$.

In the method used in \cite{I} one considers non-real values of the spectral parameter $\lambda$.
The arguments in the previous sections have dealt primarily  with real $\lambda$. Our first task 
is therefore to prove a variant of Proposition \ref{DNmap_lim} for certain complex values of $\lambda$.
For our purposes it will be enough to consider  $ \lambda$ 
in the set $\mathcal{D}_s\subset \C$, $s>0$ defined as 
\begin{align} \label{eq_defD}
    \mathcal{D}_s := \C \setminus
    \big (\{ \lambda \in \C \colon \Re \lambda \geq s\frac{1}{2}(\Im \lambda)^2 -1 \} \cup \spec (-\Delta+q) \big).
\end{align}

\begin{lem}\label{LambdaDlim} 
    Let $f \in B_{pp}^{2-1/p}(\p\Omega)$ and $q_1,q_2 \in L^{n/2}(\Omega)$.
    Then for $\lambda \in \mathcal{D}_s$, $s>0$, we have that
    \[ 
        \|\Lambda_{q_1}(\lambda)f-\Lambda_{q_2}(\lambda)f \|_{L^p(\p\Omega)} \to 0,
    \]
    as $|\lambda| \to \infty$.
\end{lem}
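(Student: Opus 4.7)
The plan is to parallel the proof of Proposition~\ref{DNmap_lim}, replacing ``$-\lambda$ large in $\R$'' by ``$\lambda\in\mathcal{D}_s$ with $|\lambda|$ large''. Setting $u:=u_1-u_2$ with $u_j\in W^{2,p}(\Omega)$ solving the Dirichlet problem \eqref{eq:bvp_uj}, $u$ again satisfies a zero-boundary inhomogeneous problem with source $(q_2-q_1)u_2$. The aim is to recover the two bounds
\[
    \|u\|_{L^p(\Omega)} \leq \tfrac{C}{|\lambda|} \|f\|_{B^{2-1/p}_{pp}(\p\Omega)}, \qquad
    \|u\|_{W^{2,p}(\Omega)} \leq C \|f\|_{B^{2-1/p}_{pp}(\p\Omega)},
\]
with $C$ independent of $\lambda$ for $|\lambda|$ large in $\mathcal{D}_s$, and then close the argument with interpolation and continuity of the Neumann trace.

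First I would extend Proposition~\ref{agmon_result} from the negative real axis to the region $\mathcal{D}_s$. As already noted at the end of Section~\ref{sec_apriori}, Agmon's argument transfers to any ray in $\C$ bounded away from the positive real axis. For $\lambda\in\mathcal{D}_s$ with $|\lambda|$ large, the defining inequality $\Re\lambda<\tfrac{s}{2}(\Im\lambda)^2-1$ forces $\arg\lambda$ to stay away from $0$, so one can choose $\mu\in\C$ with $\mu^2=-\lambda$ and $\Re\mu\geq c|\lambda|^{1/2}$; inserting $v(x,t)=\zeta(t)e^{i\mu t}u(x)$ into the $(n{+}1)$-dimensional Agmon estimate and repeating the computation yields
\[
    \sum_{j=0}^2 |\lambda|^{(2-j)/2}\|u\|_{W^{j,p}(\Omega)} \leq C\|(-\Delta+q-\lambda)u\|_{L^p(\Omega)}
\]
uniformly for $\lambda\in\mathcal{D}_s$, $|\lambda|$ large. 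With this in hand, the splitting arguments in the proofs of Lemmas~\ref{c_indep_1} and~\ref{c_indep_2} go through essentially unchanged (the auxiliary parameter $\tilde\lambda$ in Lemma~\ref{c_indep_2} can be kept on the negative real axis), giving $\|u_2\|_{L^{2n/(n-2)}(\Omega)}\leq C\|f\|_{B^{2-1/p}_{pp}(\p\Omega)}$ together with the $|\lambda|$-weighted estimates for zero-boundary problems. Applying them to $u$ via H\"older on the source $(q_2-q_1)u_2$, exactly as in \eqref{eq:intpol_1}--\eqref{eq:intpol_2}, produces the two displayed bounds on $u$. The interpolation inequality \eqref{eq:intpol_3} and continuity of the normal trace from $W^{2-\varepsilon,p}(\Omega)$ to $B^{1-1/p-\varepsilon}_{pp}(\p\Omega)$ then give
\[
    \|\tilde\gamma u\|_{B^{1-1/p-\varepsilon}_{pp}(\p\Omega)} \leq \frac{C}{|\lambda|^{\varepsilon/2}}\|f\|_{B^{2-1/p}_{pp}(\p\Omega)},
\]
and the Besov embedding $B^{1-1/p-\varepsilon}_{pp}(\p\Omega)\hookrightarrow L^p(\p\Omega)$, valid for $0<\varepsilon<1-1/p$, finishes the proof.

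The main obstacle is the first step: verifying that the Agmon-type estimate \eqref{eq:agmon_est} extends uniformly across the whole of $\mathcal{D}_s$ with the correct $|\lambda|$ dependence. Some care is required with the choice of branch of $\mu=\sqrt{-\lambda}$, with the absorption of $\|\tilde q v\|_{L^p}$ by approximating $q$ in $L^{n/2}$, and with keeping the resulting constants independent of $\lambda$ throughout the parabolic region; once these points are settled the remainder of the argument is a direct adaptation of the proof of Proposition~\ref{DNmap_lim}.
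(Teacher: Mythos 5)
Your approach fails at its first and crucial step: the region $\mathcal{D}_s$ is \emph{not} angularly separated from the positive real axis, so the Agmon estimate \eqref{eq:agmon_est} does not extend to it with the gain $|\lambda|^{-1}$. By \eqref{eq_defD}, a point $\lambda=x+it$ lies in $\mathcal{D}_s$ as soon as $x<\tfrac{s}{2}t^2-1$; in particular Isozaki's parameter $\tau(m)=(m+i)^2=m^2-1+2mi$ belongs to $\mathcal{D}_s$ for every $s>1/2$, yet $\arg\tau(m)\to0$ as $m\to\infty$. For such $\lambda$ the distance to the (real) spectrum is only $|\Im\lambda|\sim|\lambda|^{1/2}$, and the claimed bound $|\lambda|\,\|u\|_{L^p}\leq C\|(-\Delta+q-\lambda)u\|_{L^p}$ is false: testing with an eigenfunction $u=\varphi_k$ and $\lambda\in\mathcal{D}_s$ with $\Re\lambda=\lambda_k$ and $|\Im\lambda|\approx c\,\lambda_k^{1/2}$ would force $\lambda_k\leq C\lambda_k^{1/2}$. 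Likewise your choice of $\mu$ with $\mu^2=-\lambda$ and $\Re\mu\geq c|\lambda|^{1/2}$ is impossible here: for $\lambda=\tau(m)$ one has $\mu=\pm(1-im)$, so $|\Re\mu|=1$ while $|\Im\mu|=m\to\infty$, and the weight $e^{i\mu t}$ in Agmon's construction degenerates on $(-1,1)$. This is exactly the obstruction the paper flags in the remark after Lemma \ref{2ndRes}: in $\mathcal{D}_s$ only the weaker resolvent bound with a growing factor $|\Im\tau(m)|$ is available, which is not strong enough to rerun the proof of Proposition \ref{DNmap_lim} verbatim.

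The paper's proof is therefore structured entirely differently, and it uses a hypothesis your argument never invokes: that the spectral data of $q_1$ and $q_2$ coincide for $k\geq k_0$ (the lemma is stated under the standing assumptions of Theorem \ref{thm2}). One expands $\frac{d^m}{d\lambda^m}u_q(\lambda)$ in eigenfunctions, checks convergence in $W^{2,p}(\Omega)$ for large $m$ using only $|\lambda_k-\lambda|\gtrsim|\lambda_k|^{1/2}$ on $\mathcal{D}_s$, and observes that the coincidence of the data for $k\geq k_0$ makes $\frac{d^m}{d\lambda^m}\big(\Lambda_{q_1}(\lambda)f-\Lambda_{q_2}(\lambda)f\big)$ a \emph{finite} sum over $k\leq k_0$. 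Integrating $m$ times, the difference of the DN maps is a finite sum of simple poles plus a polynomial in $\lambda$; the polynomial is killed by the already-established real-axis limit of Proposition \ref{DNmap_lim}, and the surviving terms $A_{q,k}/(\lambda_{q,k}-\lambda)$ manifestly tend to zero as $|\lambda|\to\infty$. To salvage a direct PDE argument along your lines you would need a uniform $L^p$ resolvent estimate valid up to distance $|\lambda|^{1/2}$ from the spectrum; as the paper notes, such estimates are not known for the Dirichlet resolvent on a bounded domain.
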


\begin{proof}
Let $\lambda \in \mathcal{D}_s$.
In the proof of Lemma \ref{lem_taylor} we derived the following expressions,
\begin{align}\label{eq_urep2}
    \frac{d^m}{d\lambda^m} u_{q}(\lambda) 
    &=  - m! \sum_{k} \frac{1}{(\lambda_k - \lambda)^{m+1}} 
    \big\langle (q +\tilde \lambda - \lambda_k ) F, \vp_k\big\rangle \vp_k,
\end{align}
where $u_q$ is a solution to \eqref{eq_bvp32}.
We can show that the sum converges in $W^{2,p}(\Omega)$, for large $m$ in the same
way we did in the proof of Lemma \ref{lem_taylor}.
This time we use that $|\lambda_k - \lambda| \gtrsim |\lambda_k|^{1/2}$, 
when $\lambda \in \mathcal{D}_s$, we get by
estimating as in the proof of Lemma \ref{lem_taylor}, that
\begin{align*}
    \Big \| &\frac{1}{(\lambda_k - \lambda)^{j+1}} 
    \big\langle (q +\tilde \lambda - \lambda_k ) F, \vp_k\big \rangle \vp_k \Big \|_{W^{2,p}(\Omega)} 
       \leq C
       k^\frac{-(m+1)}{n} k^\frac{6}{n}.
\end{align*}
We need thus to choose $m$ so that $m > n + 5$, in order to make the series in \eqref{eq_urep} converge in the
$W^{2,p}(\Omega)$-norm. 
It follows again that 
\begin{align} \label{eq_Lambdarep2}
    \tilde \gamma \frac{d^m}{d\lambda^m} u_{q}(\lambda) 
    &=  - m! \sum_{k} \frac{1}{(\lambda_k - \lambda)^{m+1}} 
    \big \langle (q +\tilde \lambda - \lambda_k ) F, \vp_k\big\rangle \tilde\gamma \vp_k ,
\end{align}
converges in the $L^2(\Omega)$-norm, when $m$ is chosen large enough.

We will rewrite equation \eqref{eq_Lambdarep2}, by integrating by parts as follows
\begin{align*}
    \big\langle (q +\tilde \lambda - \lambda_k ) F, \vp_k\big\rangle  \tilde \gamma \vp_k 
    = \int_{\p \Omega} \nabla_n \vp_k F \,dS \, \tilde \gamma \vp_k 
    =:  A_{q,k}.
\end{align*}
Equation \eqref{eq_urep2} gives that
\begin{align*}
    \frac{d^m}{d\lambda^m} \Lambda_q(\lambda)f 
    &=  - m! \sum_{k} \frac{A_{q,k}}{(\lambda_k - \lambda)^{m+1}} .
\end{align*}
when $m$ is large and $\lambda \in \mathcal{D}_s$. Because the spectral data 
is identical for the operators $-\Delta+q_j$, $j=1,2$, when $k \geq k_0$, we get that
\begin{align*}
    \frac{d^m}{d\lambda^m} \Big( \Lambda_{q_1}(\lambda)f -  \Lambda_{q_2}(\lambda)f \Big)
    &=   m! \sum_{k=1}^{k_0} \Big ( \frac{A_{q_2,k}}{(\lambda_{q_2,k} - \lambda)^{m+1}} 
         -\frac{A_{q_1,k}}{(\lambda_{q_1,k} - \lambda)^{m+1}} \Big). 
\end{align*}
By integrating  $m$-times in $\lambda$  we have
\begin{align*}
     \Lambda_{q_1}(\lambda)f -  \Lambda_{q_2}(\lambda)f 
    &=    \sum_{k=1}^{k_0} \Big( \frac{A_{q_2,k}}{\lambda_{q_2,k} - \lambda} 
     -\frac{A_{q_1,k}}{\lambda_{q_1,k} - \lambda} \Big) + \sum_{k=1}^{k_0}  \lambda^{m-1} C_{q_1,q_2,k},
\end{align*}
where $C_{q_1,q_2,k} \in B^{1-1/p}_{pp}(\p\Omega)$. The left hand side will go to zero in
the $L^p(\p\Omega)$, when considering the special case $\lambda \in \R$ and $\lambda \to -\infty$ 
because of Proposition \ref{DNmap_lim}. The same applies to the first
term on the right hand side. It follows that $\sum_k C_{q_1,q_2,k}=0$.
We hence see that for $\lambda \in \mathcal{D}_s$
\begin{align*}
    \|\Lambda_{q_1}(\lambda)f -  \Lambda_{q_2}(\lambda)f \|_{L^p(\p\Omega)} \to 0,
\end{align*}
as $|\lambda| \to \infty$.

\end{proof}

\medskip
\noindent
Following \cite{I} we will consider 
\[
    \varphi_{\lambda,\omega} := e^{i \sqrt{\lambda} \omega \cdot x},
\]
where $\lambda  \in \C \setminus (0,\infty)$ and $\omega \in S^{n-1}$. 
Moreover we define 
\[
    S(\lambda,\theta,\omega;q) := 
    \int_{\p \Omega} \Lambda_{q}(\lambda)\varphi_{\lambda,\omega} \varphi_{\lambda,-\theta}\,dS_x
\]
Let  $R_q(\lambda)$ be the resolvent operator related to the Dirichlet problem \eqref{eq_bvp21}.
The following Lemma was established in \cite{I} for $q \in L^\infty(\Omega)$ (see Lemma 2.2 in \cite{I}). The
proof is essentially the same for $q \in L^{n/2}(\Omega)$.

\begin{lem} \label{born_lem}
We have the following identity,
\begin{align*}
    S(\lambda,\theta,\omega;q) 
    =  
        &\int_\Omega  e^{-i \sqrt{\lambda} (\theta- \omega )\cdot x}q(x) \,dx 
        -\frac{\lambda}{2}(\theta-\omega)^2 \int_\Omega e^{-i \sqrt{\lambda} (\theta- \omega )\cdot x} \,dx \\
        &- \big\langle R_{q}( \lambda)(q\varphi_{\lambda,\omega}), 
        \, \ov{q\varphi_{\lambda,-\theta}} \big\rangle,
\end{align*}
where $\langle \cdot, \cdot \rangle$ is the duality pairing.
\end{lem}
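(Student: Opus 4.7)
The plan is to represent the solution $u_{q,\omega}\in W^{2,p}(\Omega)$ of the Dirichlet problem \eqref{eq_bvp31} with boundary data $\gamma\varphi_{\lambda,\omega}$ as a plane wave minus a scattering correction, and then feed this decomposition into Green's second identity paired against $\varphi_{\lambda,-\theta}$. Since $|\omega|=1$ gives $(-\Delta-\lambda)\varphi_{\lambda,\omega}=0$, we have $(-\Delta+q-\lambda)\varphi_{\lambda,\omega}=q\varphi_{\lambda,\omega}$, so Proposition \ref{strong_prop} lets us set $v:=R_q(\lambda)(q\varphi_{\lambda,\omega})\in W^{2,p}(\Omega)\cap W^{1,p}_0(\Omega)$. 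The function $u_{q,\omega}=\varphi_{\lambda,\omega}-v$ then solves \eqref{eq_bvp31} with the prescribed data, and by Corollary \ref{exi_uniq} it is the unique such solution, so $\Lambda_q(\lambda)\varphi_{\lambda,\omega}=\tilde\gamma u_{q,\omega}=\tilde\gamma\varphi_{\lambda,\omega}-\tilde\gamma v$.

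Next, I would apply Green's identity with $u_{q,\omega}$ and the smooth function $\varphi_{\lambda,-\theta}$. Substituting $\Delta u_{q,\omega}=(q-\lambda)u_{q,\omega}$ and $\Delta\varphi_{\lambda,-\theta}=-\lambda\varphi_{\lambda,-\theta}$, the $\lambda$-terms cancel and one is left with
\begin{equation*}
\int_\Omega q\,u_{q,\omega}\,\varphi_{\lambda,-\theta}\,dx \;=\; S(\lambda,\theta,\omega;q)\;-\;\int_{\p\Omega}\varphi_{\lambda,\omega}\,\p_\nu\varphi_{\lambda,-\theta}\,dS.
\end{equation*}
Plugging in $u_{q,\omega}=\varphi_{\lambda,\omega}-v$ on the left produces $\int_\Omega q\,e^{-i\sqrt{\lambda}(\theta-\omega)\cdot x}\,dx$ together with the pairing $-\langle R_q(\lambda)(q\varphi_{\lambda,\omega}),\,\overline{q\varphi_{\lambda,-\theta}}\rangle$. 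It remains to evaluate the boundary integral: the divergence theorem applied to the smooth integrand gives
\begin{equation*}
\int_{\p\Omega}\varphi_{\lambda,\omega}\,\p_\nu\varphi_{\lambda,-\theta}\,dS
=\int_\Omega\bigl(\nabla\varphi_{\lambda,\omega}\cdot\nabla\varphi_{\lambda,-\theta}+\varphi_{\lambda,\omega}\Delta\varphi_{\lambda,-\theta}\bigr)dx
=\lambda(\omega\cdot\theta-1)\int_\Omega e^{-i\sqrt{\lambda}(\theta-\omega)\cdot x}dx,
\end{equation*}
and the polarization $\omega\cdot\theta-1=-\tfrac12(\theta-\omega)^2$ converts this into the second term of the claim. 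Rearranging the three contributions yields the stated identity.

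The main obstacle is justifying Green's identity at the low regularity $u_{q,\omega}\in W^{2,p}(\Omega)$ with $p=2n/(n+2)$ and interpreting the duality pairing correctly. For the boundary term one should read $\gamma$ and $\tilde\gamma$ as the trace maps from Section \ref{sec:app2}, pair $\tilde\gamma u_{q,\omega}\in B^{1-1/p}_{pp}(\p\Omega)$ against $\gamma\varphi_{\lambda,-\theta}\in B^{2-1/p}_{pp}(\p\Omega)$ (smooth), and pass to the limit through smooth approximations of $u_{q,\omega}$, with $\varphi_{\lambda,-\theta}$ acting as the test function throughout. For the volume term $\langle R_q(\lambda)(q\varphi_{\lambda,\omega}),\overline{q\varphi_{\lambda,-\theta}}\rangle$ one interprets the pairing as $L^{2n/(n-2)}$--$L^{2n/(n+2)}$ duality: by Sobolev embedding $R_q(\lambda)(q\varphi_{\lambda,\omega})\in L^{2n/(n-2)}(\Omega)$, while Hölder with $q\in L^{n/2}$ and $\varphi_{\lambda,-\theta}\in L^\infty$ places $q\varphi_{\lambda,-\theta}\in L^{n/2}\subset L^{2n/(n+2)}$. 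Once these duality matters are cleared the algebraic manipulation above closes the proof; nothing beyond Corollary \ref{exi_uniq}, Proposition \ref{strong_prop}, and a standard approximation argument is required.
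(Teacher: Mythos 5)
Your proposal is correct and is exactly the argument the paper has in mind: the paper gives no proof of Lemma \ref{born_lem} but defers to Lemma 2.2 of Isozaki \cite{I}, whose proof is precisely your decomposition $u_{q,\omega}=\varphi_{\lambda,\omega}-R_q(\lambda)(q\varphi_{\lambda,\omega})$ followed by Green's identity against $\varphi_{\lambda,-\theta}$ and the polarization $\omega\cdot\theta-1=-\tfrac12(\theta-\omega)^2$; your handling of the $L^{2n/(n-2)}$--$L^{2n/(n+2)}$ duality is the right adaptation to $q\in L^{n/2}$. The only point worth flagging is that you cite Proposition \ref{strong_prop} and Corollary \ref{exi_uniq}, which are stated for $\lambda\in\C\setminus(\lambda_0,\infty)$, whereas the lemma is later applied with $\tau(m)=(m+i)^2$ having large positive real part; the same decomposition goes through for any $\lambda\notin\spec(-\Delta+q)$ using the resolvent representation \eqref{eq_sumrep}, so this is a matter of citation rather than substance.
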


\medskip
\noindent
Our aim is to use the above Lemma to obtain the Fourier transform of the difference of the  potentials.
The first step is to  choose the parameters $\lambda$, $\omega$ and $\theta$ in a suitable way.
More precisely we shall make the following choices in accordance with \cite{I}.

Let $0 \neq \xi \in \R^n$ be fixed and $\eta \in S^{n-1}$ and $\xi \cdot \eta = 0$.
We will consider a specific $\theta$ and $\omega$ depending on a parameter $m \in \N$.
These are  chosen as follows
\begin{align} \label{eq_isozakiparam}
    \begin{cases}
        \theta(m) &:=  C(m) \eta + \xi/2m, \\
        \omega(m) &:= C(m) \eta - \xi/2m, \\
        \sqrt{\tau(m)} &:= m + i , 
    \end{cases}
\end{align}
where $C(m) := \big(1 - \frac{|\xi|^2}{4m^2} \big)^{1/2}$, so that $\theta(m), \omega(m) \in S^{n-1}$. 
It follows that 
\begin{align*}
    \begin{cases}
        \sqrt{\tau(m)} \big(\theta(m) - \omega(m)\big) \to  \xi,  \nonumber \\
        \Im \tau(m) \to \infty,  \\
        \Im \sqrt{\tau(m)}\theta(m), \; \Im \sqrt{\tau(m)}\omega(m) \leq C < \infty, \nonumber
    \end{cases}
\end{align*}
as $m \to \infty$. We will furthermore use the abbreviations
\[
    \psi_\omega :=  \varphi_{\tau(m),\omega(m)}  \quad \text{and}  \quad
    \psi_\theta :=  \varphi_{\tau(m),-\theta(m)}.
\]
Lemma \ref{LambdaDlim} implies now the following.

\begin{lem} \label{bndry_decay}
We have that
\begin{align*}
        S(\tau,\theta,\omega;q_1) - S(\tau,\theta,\omega;q_2) \to 0,
\end{align*}
as  $m \to \infty$.
\end{lem}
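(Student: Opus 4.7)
The plan is to express $S(\tau,\theta,\omega;q_1) - S(\tau,\theta,\omega;q_2)$ as a boundary pairing of $[\Lambda_{q_1}(\tau)-\Lambda_{q_2}(\tau)]\psi_\omega$ against $\psi_\theta$ and then combine H\"older's inequality with the decay statement of Lemma \ref{LambdaDlim}. The preliminary check is that $\tau(m) = (m+i)^2 = m^2 - 1 + 2mi$ belongs to $\mathcal{D}_s$ for some admissible $s$. The defining condition $\Re \tau < \tfrac{s}{2}(\Im\tau)^2 - 1$ reads $m^2 - 1 < 2sm^2 - 1$, which holds for every $m > 0$ as soon as $s > 1/2$; since $\tau(m) \notin \R$, it is also automatically off $\spec(-\Delta+q_j)$. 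In addition $|\tau(m)| \to \infty$.

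Next I would record the uniform bounds on the oscillating factors. Because $\sqrt{\tau(m)} = m+i$ and $|\omega(m)| = |\theta(m)| = 1$, direct computation gives $|\psi_\omega(x)| = e^{-\omega(m)\cdot x}$ and $|\psi_\theta(x)| = e^{\theta(m)\cdot x}$, both bounded on $\overline{\Omega}$ independently of $m$. In particular $\|\psi_\omega\|_{L^\infty(\p\Omega)}$ and $\|\psi_\theta\|_{L^{p'}(\p\Omega)}$ remain bounded as $m \to \infty$. By linearity of the DN map and the definition of $S$,
\[
S(\tau,\theta,\omega;q_1) - S(\tau,\theta,\omega;q_2) = \int_{\p\Omega}\bigl[\Lambda_{q_1}(\tau) - \Lambda_{q_2}(\tau)\bigr]\psi_\omega \cdot \psi_\theta\,dS_x,
\]
so H\"older yields
\[
\bigl|S(\tau,\theta,\omega;q_1) - S(\tau,\theta,\omega;q_2)\bigr| \leq \bigl\|[\Lambda_{q_1}(\tau) - \Lambda_{q_2}(\tau)]\psi_\omega\bigr\|_{L^p(\p\Omega)}\,\|\psi_\theta\|_{L^{p'}(\p\Omega)}.
\]

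The main obstacle is that Lemma \ref{LambdaDlim} is stated for a single fixed boundary datum, while here $\psi_\omega$ depends on $m$, so I cannot simply quote the limit. To deal with this I would go back into the proof of Lemma \ref{LambdaDlim}: after the argument eliminating the polynomial-in-$\lambda$ remainder $\sum_{k \leq k_0}\lambda^{m-1}C_{q_1,q_2,k}$, one obtains the explicit identity
\[
[\Lambda_{q_1}(\tau) - \Lambda_{q_2}(\tau)]\psi_\omega = \sum_{k=1}^{k_0}\left(\frac{A_{q_2,k}(\psi_\omega)}{\lambda_{q_2,k} - \tau} - \frac{A_{q_1,k}(\psi_\omega)}{\lambda_{q_1,k} - \tau}\right),
\]
where $A_{q_j,k}(f) = \bigl(\int_{\p\Omega} f\,\nabla_n \vp_{q_j,k}\,dS\bigr)\,\tilde\gamma \vp_{q_j,k}$ is a finite sum of linear functionals of the trace $f$. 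Since the family $\{\nabla_n \vp_{q_j,k},\,\tilde\gamma \vp_{q_j,k}\}_{k \le k_0,\,j=1,2}$ is fixed and $\|\psi_\omega\|_{L^\infty(\p\Omega)}$ is bounded uniformly in $m$, each $A_{q_j,k}(\psi_\omega)$ is bounded in $L^p(\p\Omega)$ uniformly in $m$. For $\tau(m) \in \mathcal{D}_s$ and $k \le k_0$ the denominators satisfy $|\lambda_{q_j,k} - \tau(m)|^2 \geq (2m)^2$, hence the right-hand side is $O(1/m)$ in $L^p(\p\Omega)$. Combined with the uniform $L^{p'}$-bound on $\psi_\theta$, this yields $|S(\tau,\theta,\omega;q_1) - S(\tau,\theta,\omega;q_2)| = O(1/m) \to 0$, as claimed.
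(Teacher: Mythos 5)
Your proof is correct and follows the same basic route as the paper's: rewrite the difference as $\int_{\p\Omega}\bigl[\Lambda_{q_1}(\tau)-\Lambda_{q_2}(\tau)\bigr]\psi_\omega\,\psi_\theta\,dS_x$, bound $\psi_\theta$ uniformly (your computation $|\psi_\theta(x)|=e^{\theta(m)\cdot x}$, $|\psi_\omega(x)|=e^{-\omega(m)\cdot x}$ and the membership $\tau(m)\in\mathcal{D}_s$ for $s>1/2$ are all right), and conclude by H\"older together with Lemma \ref{LambdaDlim}. Where you go beyond the paper is at a genuine pressure point: Lemma \ref{LambdaDlim} is stated for a single fixed boundary datum $f$, whereas here $\psi_\omega=\varphi_{\tau(m),\omega(m)}$ varies with $m$, so the lemma cannot be quoted verbatim; the paper's one-line proof passes over this. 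Your repair is exactly the right one --- extract from the proof of Lemma \ref{LambdaDlim} the finite-sum identity $[\Lambda_{q_1}(\tau)-\Lambda_{q_2}(\tau)]f=\sum_{k\le k_0}\bigl(A_{q_2,k}(f)/(\lambda_{q_2,k}-\tau)-A_{q_1,k}(f)/(\lambda_{q_1,k}-\tau)\bigr)$, note that each $A_{q_j,k}(f)$ is a fixed element of $L^p(\p\Omega)$ times a linear functional of $f$ controlled by $\|f\|_{L^\infty(\p\Omega)}$, and use $|\lambda_{q_j,k}-\tau(m)|\ge|\Im\tau(m)|=2m$ to get a uniform $O(1/m)$ bound in $L^p(\p\Omega)$. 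This makes the lemma's application to the $m$-dependent data legitimate and in fact yields a rate. The only cosmetic slip is notational: what you denote $\nabla_n\vp_{q_j,k}$ and $\tilde\gamma\vp_{q_j,k}$ are the same object (the Neumann trace of the eigenfunction), which does not affect the argument.
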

\begin{proof} Using the definition of $S$ we see that we need to show that
\[
    \int_{\p \Omega} (\Lambda_{q_1} \big(\tau)\psi_\omega 
    - \Lambda_{q_2}(\tau)\psi_\omega \big) \psi_\theta\,dS_x
    \to 0,
\]
Where $\tau=\tau(m)$ and $s>0$, are such  that $\tau(m) \in \mathcal{D}_s$. In addition we have that
    $\|\psi_\theta\|_{L^\infty}=\|\varphi_{\tau(m),-\theta(m)}\|_{L^\infty} \leq C < \infty$, when $m\to \infty$. 
The claim follows now from Lemma \ref{LambdaDlim}, by the Hölder inequality.

\end{proof}

Lemmas \ref{born_lem} and \ref{bndry_decay} imply that
\begin{small}
\begin{align} \label{eq_intlim}
    \int_\Omega  e^{-i \sqrt{\tau} (\theta- \omega )\cdot x}(q_1-q_2) +
    \sum_{j=1,2} (-1)^{j}
    \langle q_j R_{q_j}(\tau)(q_j \psi_\omega), \, \ov{\psi_{\theta}} \big \rangle \to 0,
\end{align}
\end{small}
as $m \to \infty$, when $\theta,\tau$ and $\omega$ is chosen as in \eqref{eq_isozakiparam}.
If we can now show that the two terms containing the Dirichlet resolvents $R_{q_j}$ vanish in the
limit, we then obtain that
\begin{align*}
    \int_\Omega  e^{i \xi \cdot x}(q_1-q_2)\,dx = 0,
\end{align*}
and thus that $q_1 = q_2$, which proves Theorem \ref{thm2}. It remains therefore to analyze the 
terms in \eqref{eq_intlim} containing the resolvents. To this end we derive  the following resolvent 
estimate.

\medskip
\noindent
\begin{rem} \label{rem_pos}
We can assume  for simplicity that the operators $(-\Delta+q_j)$ are positive in the sense
that
\begin{align}\label{eq_pos}
    C  \| u \|_{H^1(\Omega)}^2 \leq \big\langle (-\Delta + q_j) u,u\big \rangle, 
\end{align}
when $u \in  H^1_0(\Omega)$,
since the above inequality holds for $(-\Delta+q_j+\lambda_0)$, 
where $\lambda_0$ is a suitable constant.
This can been seen by using \eqref{eq_coercive}. 
Adding the $\lambda_0$, only shifts the spectrum of the operators $(-\Delta + q_j)$,
so that  the spectral data for $(-\Delta+q_j+\lambda_0)$ coincide for $j=1,2$, provided
that the spectral data for $(-\Delta+q_j)$, for $j=1,2$ coincide.
\end{rem}

\begin{lem} \label{2ndRes}
Assume that $q\in L^{n/2}(\Omega,\R)$ is such that \eqref{eq_pos} holds, and that $\tau(m) = (m+i)^2$.
Then  for $f \in L^p(\Omega)$, $p=2n/(n+2)$, we have that
\begin{align} \label{eq_Rgrows}
    \| R_q(\tau(m)) f \|_{L^\frac{2n}{n-2}(\Omega)} 
    \leq C |\Im \tau(m)| \big \| f \big \|_{L^\frac{2n}{n+2}(\Omega)},
\end{align}
    where $C$ is independent of $\tau(m)$.

\end{lem}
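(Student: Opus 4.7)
The plan is a standard energy argument: test the resolvent equation against the complex conjugate of its solution, and extract information separately from the real and imaginary parts, then invoke Sobolev embedding to close the estimate. Set $u := R_q(\tau)f$. Since $p = 2n/(n+2)$ gives $L^p(\Omega) \hookrightarrow H^{-1}(\Omega)$ by Sobolev duality, and since $\tau(m)\notin \spec(-\Delta+q)$ (because $\Im\tau(m)=2m\neq 0$), the $L^2$ Dirichlet theory yields $u \in H^1_0(\Omega)$ solving $(-\Delta+q-\tau)u = f$ weakly. Pairing the equation with $\bar u$ in $L^2(\Omega)$ produces the identity
\[
    \int_\Omega |\nabla u|^2 + q|u|^2 \; - \; \tau \int_\Omega |u|^2 \; = \; \int_\Omega f\bar u.
\]

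Write $p' = 2n/(n-2)$. Taking the imaginary part of this identity and applying Hölder gives
\[
    |\Im\tau|\,\|u\|_{L^2(\Omega)}^2 \;\leq\; \|f\|_{L^p(\Omega)}\|u\|_{L^{p'}(\Omega)},
\]
while taking the real part and using the positivity hypothesis \eqref{eq_pos} gives
\[
    C_0\,\|u\|_{H^1(\Omega)}^2 \;\leq\; \Re\tau\,\|u\|_{L^2(\Omega)}^2 + \|f\|_{L^p(\Omega)}\|u\|_{L^{p'}(\Omega)}.
\]
Substituting the first bound into the second yields
\[
    \|u\|_{H^1(\Omega)}^2 \;\leq\; C\Big(\tfrac{|\Re\tau|}{|\Im\tau|} + 1\Big)\,\|f\|_{L^p(\Omega)}\|u\|_{L^{p'}(\Omega)}.
\]
For $\tau(m)=(m+i)^2$ we have $\Re\tau = m^2-1$ and $\Im\tau = 2m$, so $|\Re\tau|/|\Im\tau| = (m^2-1)/(2m) \leq C\,|\Im\tau(m)|$ for $m$ large. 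Applying the Sobolev embedding $H^1_0(\Omega)\hookrightarrow L^{p'}(\Omega)$ on the left, we obtain $\|u\|_{L^{p'}(\Omega)}^2 \leq C|\Im\tau(m)|\,\|f\|_{L^p(\Omega)}\|u\|_{L^{p'}(\Omega)}$; dividing by $\|u\|_{L^{p'}(\Omega)}$ delivers the claim.

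The only delicate point is balancing the real and imaginary parts of $\tau(m)$: since $|\Re\tau(m)|\sim m^2$ grows faster than $|\Im\tau(m)|\sim m$, one might fear that the term $\Re\tau\,\|u\|_{L^2}^2$ spoils the estimate. What saves the argument is the precise identity $|\Re\tau(m)|/|\Im\tau(m)| \sim |\Im\tau(m)|/2$, which is exactly what produces the linear-in-$|\Im\tau|$ factor on the right-hand side. A sharper uniform $L^p$-resolvent estimate of Kenig--Ruiz--Sogge type would give a constant independent of $\tau$, but the present $L^2$-based bound is all that is needed for the subsequent analysis.
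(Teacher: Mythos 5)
Your proof is correct, and it takes a genuinely different route from the paper. You run a direct energy argument: pair the equation with $\bar u$, use the imaginary part of the resulting identity to get $|\Im\tau|\,\|u\|_{L^2}^2 \leq \|f\|_{L^p}\|u\|_{L^{p'}}$, and feed that into the real part together with the positivity hypothesis \eqref{eq_pos} and the arithmetic fact $|\Re\tau(m)|/|\Im\tau(m)| \lesssim |\Im\tau(m)|$. The paper instead expands $R_q(\lambda)f$ in the eigenbasis, bounds $\|R_q(\lambda)f\|_{H^1}^2$ by $\sup_k |\lambda_k/(\lambda_k-\lambda)|^2 \sum_k |\langle\varphi_k,f\rangle|^2/\lambda_k$, identifies the last sum with $\langle f, R_q(0)f\rangle \lesssim \|f\|_{H^{-1}}^2$, and then computes the supremum explicitly for $\lambda=\tau(m)$ by a small calculus argument. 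Both routes rest on the same two pillars (coercivity of the form and the duality $L^{2n/(n+2)}\hookrightarrow H^{-1}$, $H^1_0\hookrightarrow L^{2n/(n-2)}$), and both locate the loss $|\Im\tau(m)|$ in the ratio of real to imaginary part of $\tau(m)$. Your version is more elementary --- it needs neither the convergence of the spectral sum nor the monotonicity analysis of $x\mapsto x/(|x-m^2+1|+2m)$ --- and it makes the origin of the factor $|\Im\tau|$ transparent. The paper's version yields the intermediate estimate \eqref{eq_supest}, valid for arbitrary $\lambda$ off the spectrum, which is reusable for other curves in $\C$. Two cosmetic points you may as well make explicit: the real part of the form equals $\int_\Omega(|\nabla u|^2+q|u|^2)$ precisely because $q$ is real-valued, which is what \eqref{eq_pos} controls; and the final division by $\|u\|_{L^{p'}(\Omega)}$ requires the trivial remark that the case $u=0$ is immediate.
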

\begin{proof} By \eqref{eq_pos} we have that
\begin{align} \label{eq_H1_bellow}
    C  \| u \|_{H^1(\Omega)}^2 \leq \big\langle (-\Delta + q)u,u\big\rangle
   = \Big\langle \sum_k \lambda_k \langle \varphi_k ,u \rangle \varphi_k , \,u \Big\rangle,
\end{align}
which holds for $u \in H^1_0(\Omega)$.
Let $f \in C^\infty(\Omega)$.
By \eqref{eq_sumrep} 
we know that the resolvent can be expressed as the sum
\begin{align} \label{eq_sumrep}
    R_q(\lambda) f &= \sum_{k=1}^\infty \frac{ \langle \varphi_k,f \rangle }{\lambda_k-\lambda} \varphi_k,
\end{align}
    that is convergent in the $L^2(\Omega)$-norm. 
Using this and \eqref{eq_H1_bellow} we get that
\begin{align} \label{eq_sum1}
    \| R_q(\lambda) f \|_{H^1(\Omega)}^2 
    &\leq 
    \sum_{j=1}^\infty 
    \lambda_j\Big| \Big(\varphi_j,
\sum_{k=1}^\infty \frac{\langle\varphi_k,f\rangle}{\lambda_k-\lambda} \varphi_k \Big)_{L^2}\Big|^2  \nonumber \\
    &\leq C
    \sum_{k=1}^\infty  \lambda_k \frac{|\langle\varphi_k,f\rangle|^2}{|\lambda_k-\lambda|^2}\\ 
    &\leq C \sup_{k} \Big| \frac{\lambda_k}{\lambda_k - \lambda} \Big|^2
    \sum_{k=1}^\infty \frac{|\langle \varphi_k,f \rangle|^2}{\lambda_k}. \nonumber  
\end{align}
Taking the Fourier representation and using \eqref{eq_sumrep} we get that
\begin{align*}
\big\langle f,R_q(0) f \big \rangle = 
\sum_{j,k} \Big( \langle\varphi_j, f\rangle \varphi_j, \frac{\langle\varphi_k,f\rangle}{\lambda_k} \varphi_k \Big)
= \sum_{k=1}^\infty \frac{|\langle\varphi_k,f\rangle|^2}{\lambda_k}.
\end{align*}
So that by the continuity of $R_q(0)$ we have that
\begin{align} \label{eq_sum2}
    \sum_{k=1}^\infty \frac{|\langle \varphi_k,f \rangle|^2}{\lambda_k} 
    &\leq C \|f\|_{H^{-1}(\Omega)} \|R_q(0)f\|_{H^{1}_0(\Omega)} \\ 
    &\leq C \|f\|_{H^{-1}(\Omega)}^2. \nonumber 
\end{align}
Combining \eqref{eq_sum1} and \eqref{eq_sum2}, gives then
\begin{align} \label{eq_supest}
    \| R_q(\lambda) f \|_{H^1(\Omega)}^2
    &\leq C \sup_{k} \Big| \frac{\lambda_k}{\lambda_k - \lambda} \Big|^2
    \|f\|_{H^{-1}(\Omega)}^2, 
\end{align}
for $f \in C^\infty(\Omega)$, where $C$ does not depend on $\lambda$. Using a density
argument shows that this holds also when $f \in H^{-1}(\Omega)$.

    Assume now that $\lambda =  \tau(m) = (m+i)^2$ and consider the expression
\begin{align*}
    \sup_{k} \Big| \frac{\lambda_k}{\lambda_k - \tau(m)} \Big|
    &\leq C
    \sup_{k} \frac{|\lambda_k|}{|\lambda_k - m^2 + 1| + |2mi|}.
\end{align*}
The function $f(x) = x /(|x - m^2 + 1| + |2m|)$
is monotonely decreasing for $x \geq m^2-1$ and 
$f(m^2-1) \leq m$. 
For $0 < x \leq m^2-1$ we have that $f(x) \leq (m^2-1)/m \lesssim m$. 
Since $\lambda_k > 0$ we have the estimate
\begin{align*}
    \sup_{k} \Big| \frac{\lambda_k}{\lambda_k - \tau(m) } \Big|
    &\leq C |\Im \tau(m)|.
\end{align*}
The claim follows now from \eqref{eq_supest}, by setting $\lambda = \tau(m)$ and using 
Sobolev embedding.

\end{proof}

\begin{rem}
One would expect that the above $L^2$-theory based estimate could be improved,
since it is well known that in the case of $\Omega= \R^n$, or when $\Omega$
is a Riemannian manifold without boundary, and $q=0$ one
has so called "uniform Sobolev estimates", see e.g. \cite{KRS} and \cite{DKS}.
Estimates like Proposition \ref{agmon_result} also seem suggest that there is room for improvement.
We are however not aware of any such uniform estimates for 
the Dirichlet resolvent for domains with a boundary which would include $\tau(m)$, when 
$m$ is large.
\end{rem}

\medskip
\noindent
We  are now ready to show that the resolvent terms in \eqref{eq_intlim} vanish.

\begin{lem}
    Assume that $q \in L^p(\Omega,\R)$, with $p=n/2$, if $n \geq4$ and $p>n/2$, if $n=3$,
    then we have that
\begin{align*}
    &\quad\big 
    \langle q R_{q}(\tau(m))(q \psi_\omega), \, \ov{\psi_\theta} \big \rangle 
    \to 0,
\end{align*}
as $m \to \infty$.
\end{lem}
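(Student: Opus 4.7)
The plan is to bound $\big\langle q R_q(\tau(m))(q\psi_\omega), \overline{\psi_\theta}\big\rangle$ by combining the growing resolvent estimate of Lemma~\ref{2ndRes} (of order $m$) with a decaying $L^2\to L^2$ bound on the resolvent (of order $1/m$), and then interpolating between them. As preparation I would record the $L^2$-estimate
\[
    \|R_q(\tau(m))\|_{L^2(\Omega) \to L^2(\Omega)} \leq \frac{1}{2m},
\]
which is immediate from the spectral representation \eqref{eq_sumrep} and the observation that $|\lambda_k - \tau(m)| \geq |\Im(\lambda_k - \tau(m))| = 2m$, since $\lambda_k \in \R$ and $\Im \tau(m) = 2m$. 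Throughout I would exploit that $\psi_\omega$ and $\psi_\theta$ are uniformly bounded on $\Omega$ by the construction \eqref{eq_isozakiparam}, so they lie in every $L^r(\Omega)$ with a bound independent of $m$.

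For $n \geq 4$ the conclusion is then immediate, because $p = n/2 \geq 2$ implies $q \in L^2(\Omega)$; H\"older's inequality together with the above $L^2$-bound yields
\[
    \big|\big\langle q R_q(\tau(m))(q\psi_\omega), \overline{\psi_\theta}\big\rangle\big|
    \leq \|q\|_{L^2}^2 \,\|\psi_\omega\|_{L^\infty}\|\psi_\theta\|_{L^\infty} \cdot \frac{C}{m} \longrightarrow 0.
\]

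For the more delicate case $n = 3$ with $p > 3/2$, I would interpolate by Riesz--Thorin between the $L^{6/5} \to L^6$ estimate of Lemma~\ref{2ndRes} (norm $\leq C m$) and the $L^2 \to L^2$ estimate above (norm $\leq C/m$), obtaining
\[
    \|R_q(\tau(m))\|_{L^{p_\theta} \to L^{q_\theta}} \leq C m^{1-2\theta},
\]
with $1/p_\theta = 5/6 - \theta/3$ and $1/q_\theta = 1/6 + \theta/3$ for $\theta \in [0,1]$. H\"older-factoring the bilinear form as $\int R_q(\tau(m))(q\psi_\omega) \cdot q\overline{\psi_\theta}\,dx$, and writing $q\psi_\omega \in L^{p_\theta}$ and $q\overline{\psi_\theta} \in L^{q_\theta'}$ via $\|q\|_{L^p}$ together with a finite $L^r$ norm of the bounded exponentials, gives
\[
    \big|\big\langle q R_q(\tau(m))(q\psi_\omega), \overline{\psi_\theta}\big\rangle\big|
    \leq C \|q\|_{L^p}^2 \, m^{1-2\theta}.
\]

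The main obstacle is to verify that the interpolation parameter can be chosen with $\theta > 1/2$ (to produce genuine decay in $m$) while keeping the H\"older factorisation valid. The H\"older constraint is $1/p_\theta - 1/p = 1/r \geq 0$, i.e.\ $\theta \leq 5/2 - 3/p$, and by symmetry the same condition applies to $q_\theta'$. The interval $(1/2,\, 5/2 - 3/p)$ is non-empty precisely when $p > 3/2 = n/2$, which is exactly the hypothesis made for $n = 3$; this also makes transparent why the critical case $p = 3/2$ would require a stronger uniform $L^p$-resolvent bound of the type alluded to in the remark following Lemma~\ref{2ndRes}.
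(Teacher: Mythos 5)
Your proposal is correct and takes essentially the same route as the paper: for $n\ge 4$ the paper likewise reduces to the decaying $L^2\to L^2$ resolvent bound \eqref{basicResEst} (using $p^*\le 2\le p$ on the bounded domain), and for $n=3$ it performs the same Riesz--Thorin interpolation between \eqref{basicResEst} and Lemma \ref{2ndRes}, choosing the interpolation parameter (there pinned to the value making $p_\theta=p=(3+\eps)/2$, rather than left free in an interval) so that the net power of $|\Im\tau(m)|$ is negative, which is possible exactly when $p>3/2$. Your observation that the admissible interval for $\theta$ is nonempty precisely for $p>n/2$ matches the paper's remark on why the critical case $p=3/2$ is out of reach by this method.
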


\begin{proof}
It is enough to show that the $L^1$-norm of the first term in the duality pairing on the left 
hand side of the claim goes to zero, since $\vp_{\tau(m),-\theta(m)} < C < \infty$, where $C$
can be picked to be independent of $m$.

By the H\"older inequality, we have that
\begin{align} \label{eq_holder}
    \|q R_{q}(\tau(m))(q \psi_\omega)\|_{L^1(\Omega)}
    &\leq \|q\|_{L^p(\Omega)} \|R_{q}(\tau(m))(q \psi_\omega)\|_{L^{p^*}(\Omega)}. 
\end{align}
It will be convenient to write $p$, as $p=\frac{n+\eps}{2}$, for some $\eps>0$ and the Hölder conjugate $p^*$,
as $p^* = \frac{n+\eps}{n+\eps-2}$.

Suppose firstly that $n \geq 4$. In this case $p^*\leq  2 \leq p$.
Estimate  \eqref{basicResEst} gives us immediately that
\begin{align} \label{eq_case1}
    \|  R_q(\tau(m)) (q \psi_\omega)\|_{L^\frac{n+\eps}{n+\eps-2}(\Omega)} 
    \leq  \frac{C}{|\Im \tau(m)|}  \| q \psi_\omega \|_{L^{\frac{n+\eps}{2}}(\Omega)} \to 0,
\end{align}
as $m \to \infty$. Notice that this is true even when $\eps = 0$, proving the claim when $n \geq 4$.

Assume now that $n=3$.
In this case we use the Riesz-Thorin interpolation theorem to obtain an estimate for the $L^{p^*}$-norm
above. The Riesz-Thorin interpolation theorem  states that
\begin{align*}
\| T \varphi \|_{L^{q_\theta}} 
    \leq C_0^\theta C_1^{1-\theta} \| \varphi \|_{L^{p_\theta}},
\end{align*}
where 
$p_\theta^{-1} = \theta p_0^{-1}  + (1-\theta) p_1^{-1} $ and
$q_\theta^{-1}  = \theta q_0^{-1}  + (1-\theta) q_1^{-1} $,
provided we have the estimates 
$\| T \vp \|_{L^{q_j}} \leq C_j \| \varphi \|_{L^{p_j}}$, for $j=0,1$.
By Lemma \ref{2ndRes} and \eqref{basicResEst} we have that
\begin{align*} 
    \| R_q(\tau(m)) f \|_{L^2(\Omega)} &\leq \frac{C}{|\Im \tau(m)|} \| f \|_{L^2(\Omega)}, \\
    \| R_q(\tau(m)) f \|_{L^\frac{2n}{n-2}(\Omega)} &\leq C |\Im \tau(m)|  \| f \|_{L^\frac{2n}{n+2}(\Omega)}.
\end{align*}
Applying the Riesz-Thorin interpolation Theorem to these estimates and taking
$\theta \in (0,1)$ to be $\theta = \frac{3-3\eps}{6+2\eps}$, gives that  
\begin{align*}
    \|  R_q(\tau(m)) (q \psi_\omega)\|_{L^\frac{n+\eps}{n+\eps-2}(\Omega)} 
    \leq  C |\Im \tau(m)|^{2\theta-1} \| q \psi_\omega \|_{L^{\frac{n+\eps}{2}}(\Omega)} \to 0,
\end{align*}
as $m \to \infty$,
since $\theta < 1/2$. This together with \eqref{eq_case1} and \eqref{eq_holder} shows that  
\begin{align*}
    \langle q R_{q}(\tau(m))(q \psi_\omega), \,\ov{\psi_\theta} \big\rangle
    \to 0,
\end{align*}
as $m \to \infty$. 

\end{proof}

\section{Appendix A. The spectrum} \label{sec:spec}

\noindent
In this section we review some basic facts from the spectral theory relating to the operator
$-\Delta+q$, with $q \in L^{n/2}(\Omega,\R)$.
A weak solution $u \in H^1_0(\Omega)$ to \eqref{eq_bvp22} is a  function
for which
\begin{align*}
    \Phi(u,v) := \int_{\Omega} \nabla u \cdot \nabla \ov{v} + qu\ov{v} = \int_{\Omega} F \ov{v},
\end{align*}
holds for every $v \in H^1_0(\Omega)$.

The sesquilinear form $\Phi: H^1_0(\Omega)\times H^1_0(\Omega) \to \C$ is continuous, since
by the Hölder inequality and Sobolev embedding we have that
\begin{align*}
    |\Phi(u,v)| 
    &\leq 
    \|u\|_{H^1(\Omega)}\|v\|_{H^1(\Omega)} 
    + \|q\|_{L^{n/2}(\Omega)}\|u\|_{L^\frac{2n}{n-2}(\Omega)}\|v\|_{L^\frac{2n}{n-2}(\Omega)} \\
    &\leq C \|u\|_{H^1(\Omega)}\|v\|_{H^1(\Omega)}.
\end{align*}
The form $\Phi$ is in addition coercive, which can be seen as follows. Let $q_k\in C^\infty(\Omega)$,
be such that $q_k \to q$, in the $L^{n/2}(\Omega)$-norm. Then
\begin{align} \label{eq_coercive}
    \Phi(u,u)  &\geq 
    \|u\|_{H^1(\Omega)}^2
    - \|q_k\|_{L^\infty(\Omega)}\|u\|_{L^2(\Omega)}^2  
    - \|q-q_k\|_{L^{n/2}(\Omega)}\|u\|^2_{L^\frac{2n}{n-2}(\Omega)} \nonumber \\
    &\geq C \|u\|^2_{H^1(\Omega)} - C_0\|u\|_{L^2(\Omega)}^2.
\end{align}
The operator $L:=-\Delta + q:H^1_0(\Omega) \to H^{-1}(\Omega)$
can be understood as the operator given by $\langle L u, v \rangle =  \Phi(u,v)$,
where $\langle \cdot , \cdot \rangle$ denotes the duality pairing.
It follows that $L$ is also coercive and continuous.
The adjoint $L^*$ of $L$ may be defined as $\langle L^* u, v \rangle = \ov{\Phi(u,v)}$.
It follows then that $L$ is self-adjoint on $H^1_0(\Omega)$.

We have moreover that $H^1_0(\Omega) \subset L^2(\Omega) \subset H^{-1}(\Omega)$, where
$L^2(\Omega)$ is called the pivot space for $H^1_0(\Omega)$.
Since $L:H^1_0(\Omega) \to H^{-1}(\Omega)$ is bounded, coercive and self-adjoint,
and $L^2(\Omega)$ is a pivot space,
we have by Theorem 2.37 in \cite{Mc} firstly that
\begin{align*} 
    &\text{There is sequence of eigenfunctions $\vp_k \in H^1_0(\Omega)$ and corresponding } \\ &\text{
        eigenvalues 
        $-\infty < \lambda_1 \leq \lambda_2 \leq \dots  \leq \lambda_k \to \infty$.}
\end{align*}
As a second consequence of  Theorem 2.37 in \cite{Mc} is that
\begin{align*} 
    \text{The set $\{\vp_k\}$ is a complete orthonormal basis in $L^2(\Omega)$.}
\end{align*}
The Sobolev norm of the eigenfunctions also have nice estimates. 
By the estimate of Proposition \ref{ADNthm} we have that
\begin{align}\label{eq_eigest}
    \|\vp_k\|_{W^{2,p}(\Omega)} \leq C_0 (|\lambda_k|+1) \|\vp_k\|_{L^p(\Omega)}.
\end{align}
For $\lambda \in \C$, s.t.  $\lambda \notin \{\lambda_k\} = \spec(-\Delta + q)$ we have that the 
Dirichlet resolvent, i.e. the operator
$R_q(\lambda) := (-\Delta +q -\lambda)^{-1} \colon H^{-1}(\Omega) \to H^1_0(\Omega)$ is continuous. 
Hence we have the following estimate
\begin{align} \label{sobolevResEst}
    \| R_q(\lambda) f \|_{H^1(\Omega)} \leq C_\lambda \| f \|_{H^{-1}(\Omega)}.
\end{align}
The resolvent can be expressed as the sum
\begin{align} \label{eq_sumrep}
    R_q(\lambda) f &= \sum_{k=1}^\infty \frac{ \langle \varphi_k,f \rangle }{\lambda_k-\lambda} \varphi_k,
\end{align}
which is convergent in the $L^2(\Omega)$-norm 
(see for instance Corollary 2.39 in \cite{Mc}). From this one can furthermore 
derive the norm estimate
\begin{align} \label{basicResEst}
    \| R_q(\lambda) f \|_{L^2(\Omega)} \leq \frac{C}{|\Im \lambda|} \| f \|_{L^2(\Omega)}.
\end{align}

A further fact we need concerning the spectrum of the operator $-\Delta+q$  is the
following Weyl law, that pertains to potentials  $q \in L^{n/2}(\Omega,\R)$.

\begin{prop}\label{prop_weyl}
Let $q\in L^{n/2}(\Omega,\R)$, $n\geq3$. 
Then for the Schr\"odinger operator $-\Delta+q$, with form domain $H^1_0(\Omega)$,
we have the Weyl law
\[
    \lambda_k\sim\frac{4\pi^2}{(V_B\,V_\Omega)^{2/n}}\,k^{2/n}, 
\]
as $k\to \infty$, 
where $V_B$ is the volume of the $n$-dimensional unit ball and $V_\Omega$ is
the volume of $\Omega$.

\end{prop}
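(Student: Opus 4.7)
The plan is to compare the eigenvalues $\lambda_k$ of $-\Delta+q$ with the eigenvalues $\lambda_k^0$ of the free Dirichlet Laplacian on $\Omega$, for which the classical Weyl law
\[
\lambda_k^0 \sim \frac{4\pi^2}{(V_B V_\Omega)^{2/n}} k^{2/n}
\]
is already known, and then to show that the $L^{n/2}$-potential perturbs each eigenvalue only by a lower-order amount. The comparison will be carried out via the min-max characterization
\[
\lambda_k = \inf_{V_k} \sup_{u \in V_k \setminus \{0\}} \frac{\Phi(u,u)}{\|u\|_{L^2}^2},
\]
with $V_k$ ranging over $k$-dimensional subspaces of $H^1_0(\Omega)$ and $\Phi(u,u)=\|\nabla u\|_{L^2}^2+\int_\Omega q|u|^2$, the quadratic form shown in Section \ref{sec:spec} to be continuous and coercive.

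Given $\varepsilon>0$, I would pick $q_\varepsilon \in L^\infty(\Omega)$ with $\|q-q_\varepsilon\|_{L^{n/2}(\Omega)} < \varepsilon$, which is possible by density (e.g.\ by truncating $q$ at a large level on $\Omega$). Using Hölder and the Sobolev embedding $H^1_0(\Omega) \hookrightarrow L^{2n/(n-2)}(\Omega)$, valid for $n\geq 3$,
\[
\Big|\int_\Omega q|u|^2\Big| \leq C\varepsilon \|\nabla u\|_{L^2}^2 + \|q_\varepsilon\|_{L^\infty} \|u\|_{L^2}^2,
\]
with $C$ depending only on the Sobolev constant. This yields the two-sided form bound
\[
(1-C\varepsilon)\|\nabla u\|_{L^2}^2 - M_\varepsilon \|u\|_{L^2}^2 \leq \Phi(u,u) \leq (1+C\varepsilon)\|\nabla u\|_{L^2}^2 + M_\varepsilon \|u\|_{L^2}^2,
\]
where $M_\varepsilon := \|q_\varepsilon\|_{L^\infty}$.

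Feeding this bound into the min-max formula for both $-\Delta+q$ and $-\Delta$ gives
\[
(1-C\varepsilon)\lambda_k^0 - M_\varepsilon \leq \lambda_k \leq (1+C\varepsilon)\lambda_k^0 + M_\varepsilon.
\]
Dividing by $\lambda_k^0$ and sending $k \to \infty$, the constants $M_\varepsilon$ drop out because $\lambda_k^0 \to \infty$, so $\limsup_k |\lambda_k/\lambda_k^0 - 1|\leq C\varepsilon$. Since $\varepsilon>0$ was arbitrary, $\lambda_k/\lambda_k^0 \to 1$, and combining this with the classical Weyl law for $\lambda_k^0$ yields the claim.

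The main obstacle is that $q$ sits at the critical scaling, where locally it can be as singular as $|x|^{-2}$, so no naive sup-norm truncation argument is available. What makes the endpoint $p=n/2$ work is exactly that, by Sobolev embedding, an $L^{n/2}$ potential is relatively form-bounded with respect to $-\Delta$ with arbitrarily small relative bound, the same mechanism already exploited in \eqref{eq_coercive}; this is what allows the $C\varepsilon\|\nabla u\|_{L^2}^2$ contribution to be absorbed quantitatively in the min-max comparison.
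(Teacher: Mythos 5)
Your proposal is correct and takes essentially the same route as the paper: the paper also compares $\lambda_k$ with the free Dirichlet eigenvalues via the min--max principle, using exactly the relative form bound $\int_\Omega q|\varphi|^2 \leq \varepsilon\|\nabla\varphi\|_{L^2}^2 + C_\varepsilon\|\varphi\|_{L^2}^2$ (proved, as you do, by splitting $q$ into a bounded approximant plus an $L^{n/2}$-small remainder and applying H\"older with the Sobolev embedding). The only cosmetic difference is that the paper isolates this form bound as a separate lemma.
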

\begin{proof}
    Denote the Dirichlet eigenvalues for the Laplacian (i.e. when $q=0$), by
$\lambda_1^D\leqslant\lambda_2^D\leqslant\ldots$. 
These exhibit the following Weyl law,
\[
    \lambda_k^D\sim\frac{4\pi^2}{(V_B\,V_\Omega)^{2/n}}\,k^{2/n},
\]
as $k\to \infty$
(see e.g.\ Sect.\ 13.4 of \cite{Zuily}).

Next, the mini-max principle (see e.g.\ Sect.\ 4.5 in \cite{Davies}) says that
\[
    \lambda_k=\min_{\substack{X\subseteq D\\\dim X=k}}\max_{\substack{\varphi\in X\\\left\|\varphi\right\|=1}}
    \int\limits_\Omega\left(\left|\nabla\varphi\right|^2+q\left|\varphi\right|^2\right)
\]
and
\[
    \lambda_k^D=\min_{\substack{X\subseteq D\\\dim X=k}}\max_{\substack{\varphi\in X\\\left\|\varphi\right\|=1}}
    \int\limits_\Omega\left|\nabla\varphi\right|^2,
\]
where the minima are to be taken over vector subspaces $X$ of $D$.  By Lemma \ref{lem_formbnd} 
there exists a constant
$C_\varepsilon\in\mathbb R_+$, for $\varepsilon > 0$, such that
\begin{align}\label{eq_formbnd}
    \int\limits_\Omega
    q\left|\varphi\right|^2\leqslant\varepsilon\bigl\|\nabla\varphi\bigr\|^2_{L^2(\Omega)}
    +C_\varepsilon\bigl\|\varphi\bigr\|^2_{L^2(\Omega)},
\end{align}
for $\varphi\in D$.
Combining this relative bound with the mini-max principle gives
\begin{align*}
\lambda_k&=\min_{\substack{X\subseteq D\\\dim X=k}}\max_{\substack{\varphi\in
    X\\\left\|\varphi\right\|=1}}\int\limits_\Omega\left(\left|\nabla\varphi\right|^2+q\left|\varphi\right|^2\right)\\
&\leqslant\min_{\substack{X\subseteq D\\\dim X=k}}\max_{\substack{\varphi\in
    X\\\left\|\varphi\right\|=1}}\int\limits_\Omega\left(\left(1+\varepsilon\right)\left|\nabla\varphi\right|^2+C_\varepsilon\left|\varphi\right|^2\right)
=\left(1+\varepsilon\right)\lambda_k^D+C_\varepsilon.
\end{align*}
Similarly, we obtain
\[\lambda_k\geqslant\left(1-\varepsilon\right)\lambda_k^D-C_\varepsilon.\]
Thus,
\[\left(1-2\varepsilon\right)\lambda_k^D\leqslant\lambda_k\leqslant\left(1+2\varepsilon\right)\lambda_k^D\]
for sufficiently large $k\in\mathbb Z_+$. Since $\varepsilon$ was arbitrarily
small, we have established that $\lambda_k\sim\lambda_k^D$.

\end{proof}

\noindent
We need to justify the use of \eqref{eq_formbnd} in the previous proof, in order to finish it.
This is done in the following Lemma.

\begin{lem}\label{lem_formbnd}
Let $q \in L^{n/2}(\Omega,\R)$, $n\geq3$ and $\vp \in H^1_0(\Omega)$. Then for 
$\varepsilon > 0$, we have 
\begin{align*}
    \int_\Omega q \vp^2 \leq \varepsilon\|\nabla \vp\|_{L^2(\Omega)}^2 + C_\varepsilon \| \vp \|_{L^2(\Omega)}^2.
\end{align*}
\end{lem}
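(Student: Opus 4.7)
The plan is to prove the bound via a standard splitting of $q$ into a large-$L^\infty$ piece and a small-$L^{n/2}$ piece, then exploit the Sobolev embedding $H^1_0(\Omega) \hookrightarrow L^{2n/(n-2)}(\Omega)$ to absorb the small piece into the gradient term.

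First I would fix $\varepsilon > 0$ and write $q = q^{(k)} + r^{(k)}$, where $q^{(k)} := q \cdot \mathbf{1}_{\{|q| \leq k\}}$ is the truncation of $q$ at level $k$ and $r^{(k)} := q - q^{(k)} = q \cdot \mathbf{1}_{\{|q| > k\}}$. Then $q^{(k)} \in L^\infty(\Omega)$ with $\|q^{(k)}\|_{L^\infty} \leq k$, while $|r^{(k)}|^{n/2} \leq |q|^{n/2} \in L^1(\Omega)$ and $|r^{(k)}| \to 0$ pointwise a.e.\ as $k \to \infty$, so by dominated convergence
\[
\|r^{(k)}\|_{L^{n/2}(\Omega)} \to 0 \quad \text{as } k \to \infty.
\]

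Next I would estimate the two contributions separately. For the $L^\infty$ part, the trivial bound
\[
\int_\Omega q^{(k)} \varphi^2 \leq \|q^{(k)}\|_{L^\infty(\Omega)} \|\varphi\|_{L^2(\Omega)}^2 \leq k \|\varphi\|_{L^2(\Omega)}^2
\]
holds. For the remainder, the Hölder inequality with exponents $n/2$ and $n/(n-2)$ (noting that $|\varphi|^2 \in L^{n/(n-2)}$ means $\varphi \in L^{2n/(n-2)}$) together with Sobolev embedding $\|\varphi\|_{L^{2n/(n-2)}(\Omega)} \leq C_S \|\nabla \varphi\|_{L^2(\Omega)}$ (valid on $H^1_0(\Omega)$) gives
\[
\int_\Omega r^{(k)} \varphi^2 \leq \|r^{(k)}\|_{L^{n/2}(\Omega)} \|\varphi\|_{L^{2n/(n-2)}(\Omega)}^2 \leq C_S^2 \|r^{(k)}\|_{L^{n/2}(\Omega)} \|\nabla \varphi\|_{L^2(\Omega)}^2.
\]

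Finally, I would choose $k = k(\varepsilon)$ large enough that $C_S^2 \|r^{(k)}\|_{L^{n/2}(\Omega)} \leq \varepsilon$, which is possible by the convergence above. Setting $C_\varepsilon := k(\varepsilon)$ yields the claimed inequality. There is no real obstacle here; the only thing to check is that the Sobolev embedding applies and that truncation of $q$ at level $k$ does approach $q$ in $L^{n/2}$, both of which are standard.
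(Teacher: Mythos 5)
Your proof is correct and follows essentially the same strategy as the paper: decompose $q$ into a bounded piece plus a piece with small $L^{n/2}$-norm, bound the former by $C_\varepsilon\|\vp\|_{L^2(\Omega)}^2$, and absorb the latter into $\varepsilon\|\nabla\vp\|_{L^2(\Omega)}^2$ via H\"older and the Sobolev embedding $H^1_0(\Omega)\hookrightarrow L^{2n/(n-2)}(\Omega)$. The only (harmless) differences are that you produce the decomposition by truncating $q$ rather than by smooth approximation in $L^{n/2}(\Omega)$, and you apply H\"older directly with exponents $n/2$ and $n/(n-2)$, which lets you skip the intermediate Young's-inequality step the paper uses.
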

\begin{proof}
By the Hölder inequality we have that
\begin{align*}
    \int_\Omega q \vp^2 \leq \|q\vp\|_{L^\frac{2n}{n+2}(\Omega)}\| \vp \|_{L^\frac{2n}{n-2}(\Omega)}.
\end{align*}
By the Poincaré inequality and Sobolev embedding we have that
\begin{align} \label{eq_split} 
    \int_\Omega q \vp^2 &\leq C\|q\vp\|_{L^\frac{2n}{n+2}(\Omega)}\| \nabla \vp \|_{L^2(\Omega)} \nonumber\\
    &\leq \frac{C}{\varepsilon}\|q\vp\|_{L^\frac{2n}{n+2}(\Omega)}^2 
    + \varepsilon\| \nabla \vp \|_{L^2(\Omega)}^2.
\end{align}
Next we pick a smooth approximation $q_k\in C^\infty(\Omega)$, s.t.
$q_k \to q$, in $L^{n/2}(\Omega)$. We have that
\begin{align*}
    \|q\vp\|_{L^\frac{2n}{n+2}(\Omega)}^2 
    &\leq 
    2\|q_k\vp\|_{L^\frac{2n}{n+2}(\Omega)}^2 
    + 2\|(q-q_k)\vp\|_{L^\frac{2n}{n+2}(\Omega)}^2  \\
    &\leq 
    2\|q_k\|^2_{L^\infty(\Omega)} \|\vp\|_{L^2(\Omega)}^2 
    + 2\|(q-q_k)\|^2_{L^\frac{n}{2}(\Omega)}\|\vp\|_{L^\frac{2n}{n-2}(\Omega)}^2  \\
    &\leq 
    2\|q_k\|^2_{L^\infty(\Omega)} \|\vp\|_{L^2(\Omega)}^2 
    + 2\|q-q_k\|^2_{L^\frac{n}{2}(\Omega)}\|\nabla \vp\|_{L^2(\Omega)}^2. 
\end{align*}
Using this with estimate \eqref{eq_split} and picking  $\|q-q_k\|_{L^\frac{n}{2}}$
to be small in a suitable way gives
\begin{align*}
    \int_\Omega q \vp^2 \leq \varepsilon\| \nabla \vp\|_{L^2(\Omega)}^2 + C_\varepsilon \| \vp \|_{L^2(\Omega)}^2.
\end{align*}
\end{proof}

\section{Appendix B. Besov Spaces } \label{sec:app2}
Here we review some basic definitions and  properties of the Besov spaces
that we use.
The main reason for considering Besov spaces is that they give  a more precise meaning to the 
restriction $u|_{\p \Omega}$, when $u$ is a member of the  Sobolev space $W^{s,p}(\Omega)$.
The main reference for this section is \cite{Tr}.

\medskip
\noindent
We use the following definitions of the spaces $B_{pp}^s(\Omega)$. We split
$s \in \R$, as $s = [s]+ \{s\}$, where $[s]$ is an integer
and $0\leq \{s\} <1$.
The space $B_{pp}^s(\R^n)$, with $1<p<\infty$ and $0 < s \notin \N$,
consists of those functions in $f \in L^p(\R^n)$, for which the norm
\begin{small}
\begin{align*}
    \|f\|_{B_{pp}^s(\R^n)} :=  \|f\|_ {W^{[s],p}(\R^n)} 
                           +  \sum_{|\alpha| = [s]} \left( \int_{\R^n}
        \frac{\|D^\alpha f(\cdot + y) - D^\alpha f \|^p_{L^p(\R^n)}}{|y|^{n + p\{s\}}}
 \, dy \right)^{1/p}, 
\end{align*}
\end{small}
is finite, where $\alpha:=(\alpha_1,\dots,\alpha_n)$ is a multi-index 
and  $D^\alpha := \p^{\alpha_1}_{x_1}\dots\p^{\alpha_n}_{x_n}$.
For a bounded domain $\Omega \subset \R^n$, we define the space  
$B_{pp}^s(\Omega)$ as the set
\[
    B_{pp}^s(\Omega) := \{ f \in L^p(\Omega) \colon \exists g \in B_{pp}^s(\R^n) \text{ s.t. } g|_{\Omega} = f\},
\]
equipped with the norm 
\[
    \|f\|_{B_{pp}^s(\Omega)} :=  
    \inf \{ \|g\|_{B_{pp}^s(\R^n)} \colon g \in B_{pp}^s(\R^n) \text{ s.t. } g|_{\Omega} = f\}.
\]
Our main interest in the spaces $B^s_{pp}(\Omega)$ is that they provide natural trace
spaces for the spaces $W^{2,p}(\Omega)$. 
The Dirichlet trace operator $\gamma$, may be defined by
\[
    \gamma u := u|_{\p\Omega}. 
\]
for $u\in C^\infty(\Omega)$. 
One can then extend this map to a continuous map $\gamma\colon W^{2,p}(\Omega) \to  B^{2-1/p}_{pp}(\p \Omega)$, 
so that\footnote{
    Firstly we have that $W^{2,p}(\Omega) = F^{2}_{p2}(\Omega)$, where $F^{2}_{p2}(\Omega)$
    is a Triebel space. See \cite{Tr}, 
    section 3.4.2, p. 208. By Theorem 3.3.3 in \cite{Tr}, we have on the other hand
    that
    \[
        \gamma:F^{2}_{p2}(\Omega) \to   B^{2-1/p}_{pp}(\p \Omega),
    \]
    is continuous}
\[
    \|\gamma u \|_{B_{pp}^{2-1/p}(\p\Omega)} \leq C  \| u \|_{W^{2,p}(\Omega)}.
\]
We will use the notation $u|_{\p \Omega} := \gamma u$.
The Dirichlet trace operator $\gamma$ has moreover a right inverse $E$, 
$E\colon  B^{2-1/p}_{pp}(\p \Omega) \to W^{2,p}(\Omega)$,
for which $\gamma E f = f$ and  
\[
   \| E f \|_{W^{2,p}(\Omega)} \leq C \| f \|_{B_{pp}^{2-1/p}(\p\Omega)},
\]
See section 3.3.3. in \cite{Tr}.

Likewise one can define the Neumann trace operator $\tilde{\gamma}$ given by
\[
    \tilde{\gamma} u := \p_{\nu} u|_{\p\Omega}  
\]
for $u\in C^\infty(\Omega)$ and 
where $\nu$ is the outer unit normal vector to $\p \Omega$. The Neumann trace operator
\[
    \tilde{\gamma} \colon W^{2,p}(\Omega) \to  B^{1-1/p}_{pp}(\p \Omega),
\]
is bounded and linear, which follows similarly as the continuity for the
Dirichlet trace operator.
For $u \in W^{2,p}(\Omega)$, we will use for $\tilde \gamma u$  the notation
$\p_\nu u|_{\p\Omega}$.

\section*{Acknowledgements}  
The author would like to thank Esa Vesalainen, Katya Krupchyk and Lassi Päivärinta 
for some very useful discussions and suggestions.


\end{document}